\theoremstyle{plain} \numberwithin{equation}{section}
\newtheorem{thm}{Theorem}[section]
\newtheorem{theorem}[thm]{Theorem}
\newtheorem{lemma}[thm]{Lemma}
\newtheorem{corollary}[thm]{Corollary}
\newtheorem{example}[thm]{Example}
\newtheorem{definition}[thm]{Definition}
\begin{document}
	\setcounter{page}{1}

	\title[Isoclinism and factor set in regular Hom-Lie Superalgebras ]{  Isoclinism and factor set in regular Hom-Lie Superalgebras }

	\author[NANDI]{N. Nandi}
	\address{Department of Mathematics, National Institute of Technology  \\
		Rourkela, 
		Odisha-769028 \\
		India}
	\email{nupurnandi999@gmail.com}
	\author[Padhan]{R. N. Padhan}
	\address{Centre for Applied Mathematics and Computing, Institute of Technical Education and Research  \\
		Siksha `O' Anusandhan (A Deemed to be University)\\
		Bhubaneswar-751030 \\
		Odisha, India}
	\email{rudra.padhan6@gmail.com, rudranarayanpadhan@soa.ac.in}
	
	\author[Pati]{K. C. Pati}
	\address{Department of Mathematics, National Institute of Technology  \\
		Rourkela, 
		Odisha-769028 \\
		India}
	\email{kcpati@nitrkl.ac.in}
	
	\keywords{Isoclinism; Factor set; Hom-Lie superalgebras}
	\subjclass[2020]{17B05; 17B30}
	\maketitle

\begin{abstract}
	Hom-Lie superalgebras can be considered as the deformation of Lie superalgebras; which are $\mathbb{Z}_2$-graded generalization of Hom-Lie algebras. The motivation of this paper is to introduce the concept of isoclinism and factor set in regular Hom-Lie superalgebras. Finally, we obtain that two finite same dimensional regular Hom-Lie superalgebras are isoclinic if and only if they are isomorphic.
\end{abstract}

\section{Introduction}
Hartwig et al. studied Hom-Lie algebras as a part of a study of deformations of the Witt and the Virasoro algebras \cite{hartwig2006}. Lie superalgebra was introduced by Kac which is the $\mathbb{Z}_2$-graded Lie algebra \cite{kac1977}. Then Hom-Lie algebra was generalized to Hom-Lie superalgebra by Ammar et al. \cite{ammar2010}. Cohomology of Hom-Lie superalgebras and $q$-deformed Witt superalgebras was studied in \cite{ammar2013}.
\par
As is well known, isoclinism plays an vital role in classification of finite $p$-group. The notion of isoclinism for group was introduced by Hall in 1940 which is weaker than isomorphism. In 1993, Moneyhun used this concept on Lie algebras see \cite{moneyhun1994,moneyhunK1994}. Recently, Nayak \cite{nayak2018} and Padhan et al. \cite{padhan2019} studied isoclinism for Lie superalgebras. Isocinism for $n$-Lie superalgebras is studied in \cite{khuntia2022,padhan2022}. Factor set in Lie algebras was defined by Moneyhun \cite{moneyhun1994}, and the same for pair of Lie algebras was defined by Moghaddam et al. in \cite{eshrati2016}. It is also defined for Lie superalgebras by Nayak et al. \cite{nayak2019}. 
\par In this paper, we define isoclinism for regular Hom-Lie superalgebras which is not true in general for any arbitary Hom-Lie superalgebras. Furthermore the factor set in regular Hom-Lie superalgebras is defined and some of its properties are discussed. We generalize some results for regular Hom-Lie algebras \cite{padhan2019} to regular Hom-Lie superalgebras. One can refer to \cite{sheng2013,agrebaoui2019} for the other exploring works of this field.

\par In this paper all vector superspaces are considered over $\mathbb{F}$, a field of characteristic $0$ and $\mathbb{Z}_2=\{\overline{0},\overline{1}\}$ the additive group of two elements. A superspace is a $\mathbb{Z}_2$-graded vector space $V={V}_{\overline{0}}\oplus {V}_{\overline{1}}$. A $sub superspace$ is a $\mathbb{Z}_2$-graded vector space which is closed under bracket operation. For a homogeneous element $m\in V_{\overline{0}}\cup V_{\overline{1}}$, we write $|m|$ for the parity $\gamma\in \mathbb{Z}_2$. For any superalgebra in this paper, homomorphisms always mean even homomorphisms.
\par  Let us begin with some definitions related to Hom-Lie superalgebras.
\begin{definition}\label{d1}  
	A Hom-Lie superalgebra is a triple $(G,[.,.],\theta)$ which equipped with a $\mathbb{Z}_2$-graded vector space $G$, a bilinear map $[.,.]:G\times G\rightarrow G$, and a homomorphism $\theta:G\rightarrow G$ satisfying;
	\begin{enumerate}
		\item $[n_1,n_2]=-(-1)^{|n_1||n_2|}[n_2,n_1]$ (graded skew-symmetry),
		\item  $(-1)^{|n_1||n_3|}[\theta(n_1),[n_2,n_3]]+(-1)^{|n_3||n_2|}[\theta(n_3),[n_1,n_2]]+(-1)^
		{|n_2||n_1|}[\theta(n_2),[n_3,n_1]]=0$ (graded Hom-Jacobi identity),
	\end{enumerate}
	for homogeneous elements $n_1,n_2,n_3\in G.$ 
\end{definition}

Suppose $(G,[.,.],\theta)$ is a Hom-Lie superalgebra. A Hom-Lie subspace $W\subseteq G $ is a $Hom$-$subalgebra$ of $G$ if $\alpha(W) \subseteq W$ and $W$ is closed under the bracket operation $[.,.]$, i.e., $[W,W]\subseteq W$. A Hom-subalgebra $W$ is called a $graded ~ideal$ of $G$ if $[W,G]\subseteq W$. The $center$ of the Hom-Lie superalgebra $(G,[.,.],\theta)$, denoted by $Z(G)$ is defined by $Z(G)=\{x\in G| [x,y]=0$ {\rm{for}} $y\in G\}.$ An $abelian$ Hom-Lie superalgebra is a vector superspace $G$ equipped with trivial bracket and an even linear map $\theta:G\rightarrow G$. A Hom-Lie superalgebra $(G,[.,.],\theta)$ is called $multiplicative$ if $\theta([n_1,n_2])=[\theta(n_1),\theta(n_2)]$ for $n_1,n_2\in G$. A multiplicative Hom-Lie superalgebra $(G,[.,.],\theta)$ is said to be $regular$ if $\theta$ is bijective.

In this paper we only consider multiplicative Hom Lie superalgebras.
\begin{example}
	Taking $\theta=Id$ in Definition \ref{d1}, we obtain the definition of a Lie superalgebra.
\end{example}

\begin{lemma}\label{l3}
	If $(G,[.,.],\theta)$ is a regular Hom-Lie superalgebra, then $Z(G)$ is a Hom-ideal of $G$.
\end{lemma}
\begin{proof}
	Let $z\in Z(G)$. Suppose $m$ and $u$ are two homogeneous elements of $G$, we put $m=\theta(u)$, then we have \[ [\theta(z),m]= [\theta(z),\theta(u)]=\theta([z,u])=0, \]
	which implies $\theta(Z(G))\subseteq  Z(G)$.
	Let $z_1,z_2 \in Z(G)$, then 
	\[[[z_1,z_2],m]=[[z_1,z_2],\alpha(u)]=0,\]
	which means that $Z(G)$ is a Hom-ideal of $G$.
\end{proof}
The concept of stem Lie superalgebra is defined and studied in [6].
\begin{definition}\label{d4}
	If $Z(G)\subseteq G^{'}$ then a regular Hom-Lie superalgebra $(G,[.,.],\alpha)$ is called stem Hom-Lie superalgebra.  
\end{definition}

\begin{definition}\label{d5}
	Let $(G_1,[.,.]_1,\theta_1)$ and $(G_2,[.,.]_2,\theta_2)$ be two Hom-Lie superalgebras. A homomorphism from $f:(G_1,[.,.]_1,\theta_{1})\rightarrow (G_2,[.,.]_2,\theta_{2})$ is an even linear map $f:G_1\rightarrow G_2$ satisfying $f([n_1,n_2]_1)=[f(n_1),f(n_2)]_2$ and $f\theta_1=\theta_2f$ for $n_1,n_2\in G_1$. In particular, the following diagram is commutative:
	\begin{center}
		\begin{tikzpicture}[>=latex]
			\node (x) at (0,0) {\(G_1\)};
			\node (z) at (0,-2) {\(G_1\)};
			\node (y) at (2,0) {\(G_2\)};
			\node (w) at (2,-2) {\(G_2.\)};
			\draw[->] (x) -- (y) node[midway,above] {$f$};
			\draw[->] (x) -- (z) node[midway,left] {$\theta_1$};
			\draw[->] (z) -- (w) node[midway,below] {$f$};
			\draw[->] (y) -- (w) node[midway,right] {$\theta_2$};
		\end{tikzpicture}\\
	\end{center}
	They are isomorphic if $f:G_1\rightarrow G_2$ is bijective .	
\end{definition}

\begin{lemma}\label{l6}
	Suppose $f:(G_1,[.,.]_1,\theta _1)\longrightarrow (G_2,[.,.]_2,\theta_2)$ is an isomorphism. If $(G_1,[.,.]_1,\theta _1)$ is regular then $(G_2,[.,.]_2,\theta _2)$ is also regular.
\end{lemma}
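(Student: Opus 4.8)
The plan is to verify the two defining properties of regularity for $(G_2,[.,.]_2,\theta_2)$ separately: that $\theta_2$ is multiplicative, i.e. $\theta_2([a,b]_2)=[\theta_2(a),\theta_2(b)]_2$ for all homogeneous $a,b\in G_2$, and that $\theta_2$ is bijective. The single observation that drives both verifications is that the commuting-square condition $f\theta_1=\theta_2 f$ from Definition \ref{d5}, combined with the bijectivity of $f$, yields the conjugation relation
\[
\theta_2 = f\,\theta_1\,f^{-1}.
\]
Everything else is a transport-of-structure argument along the isomorphism $f$.

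From this relation bijectivity of $\theta_2$ is immediate: $f$ is bijective by hypothesis (it is an isomorphism) and $\theta_1$ is bijective because $(G_1,[.,.]_1,\theta_1)$ is regular, so $\theta_2$ is a composite of three bijections and hence bijective. For multiplicativity I would instead exploit only the surjectivity of $f$: given homogeneous $a,b\in G_2$, choose $x,y\in G_1$ with $a=f(x)$ and $b=f(y)$, and then chain the two defining identities of the homomorphism $f$ with the multiplicativity of $\theta_1$. Concretely,
\[
\theta_2([a,b]_2)=\theta_2 f([x,y]_1)=f\theta_1([x,y]_1)=f([\theta_1 x,\theta_1 y]_1)=[\,f\theta_1 x,\; f\theta_1 y\,]_2=[\theta_2 a,\theta_2 b]_2,
\]
where the middle equality is precisely the multiplicativity of $\theta_1$ and the remaining steps are the relations $f([\cdot,\cdot]_1)=[f(\cdot),f(\cdot)]_2$ and $f\theta_1=\theta_2 f$.

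I do not expect a serious obstacle here; the content is essentially bookkeeping. The only point requiring care is to keep track of which half of the bijectivity of $f$ each part of the argument actually consumes: multiplicativity of $\theta_2$ needs only that $f$ be surjective, so that every pair of elements of $G_2$ lifts to $G_1$, whereas bijectivity of $\theta_2$ genuinely uses injectivity as well, through the existence of $f^{-1}$ in the conjugation relation. Since $f$ is assumed to be an isomorphism, both are available, and the two properties together give that $(G_2,[.,.]_2,\theta_2)$ is regular.
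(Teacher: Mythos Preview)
Your proposal is correct and follows essentially the same approach as the paper: lift elements of $G_2$ to $G_1$ via the surjectivity of $f$, use the intertwining relation $f\theta_1=\theta_2 f$ together with the multiplicativity of $\theta_1$ to obtain multiplicativity of $\theta_2$, and deduce bijectivity of $\theta_2$ from $\theta_2=f\theta_1 f^{-1}$. The paper's presentation is the same chain of equalities (with minor notational slips), so there is nothing to add.
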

\begin{proof}
	Suppose $n_1,n_2 \in G_2$ then there exists $m_1,m_2 \in G_1$ such that $f(m_1)=n_1$ and $f(m_2)=n_2$. As $f \theta_1=\theta_2 f$ and $\theta_1$ is regular, we have
	\begin{align*}
		\theta_{2}([n_1,n_2]_2)&=\theta_{2}([f(n_1),f(n_2)]_2)=\theta_{2} f([n_1,n_2]_1)\\
		&=f \theta_{1}([n_1,n_2]_1)=f([\theta_1(n_1),\theta_1(n_2)]_1)\\
		&=[f\theta_1(n_1),f\theta_1(n_2)]_2
		=[\theta_2 f (n_1), \theta_2 f (n_2)]_2=[\theta_2(n_1),\theta_2(n_2)]_2.
	\end{align*}
	\noindent Now $f$ and $\theta_1$ are bijective, so $\theta_2$. Hence $(G_2,[.,.],\theta _2)$ is regular. 
\end{proof}

\noindent  $(G/K,[.,.],\theta)$ is a Hom-Lie superalgebra with skew-bilinear and linear map and known as the quotient Hom-Lie superalgebra. For any Hom-ideal $K$ of $(G,[.,.],\theta)$, we can define quotient Hom-Lie superalgebra on the quotient vector superspace $G/K$ by defining $[.,.]:G/K\times G/K\rightarrow G/K$ by $[\overline{n_{1}},\overline{n_{2}}]=\overline{[n_1,n_2]}$ for $\overline{n_1},\overline{n_2}\in G/K$ and $\tilde{\theta}:G/K\rightarrow G/K$ is induced by $\theta$, i.e., $\tilde{\theta}(\overline{m})=\theta(m)+K.$
\par Suppose that $(G_1,[.,.]_{G_1},\theta_1)$ and $(G_2,[.,.]_{G_2},\theta_2)$ are two Hom-Lie superalgebras, define the direct sum of these Hom-Lie superalgebras $(G_1\oplus G_2,[.,.]_{G_1\oplus G_2},\Gamma)$ as: $$[(m_1,n_1),(m_2,n_2)]_{G_1\oplus G_2}=([m_1,m_2]_{G_1},[n_1,n_2]_{G_2}),$$ and the linear map $\Gamma:G_1\oplus G_2\rightarrow G_1\oplus G_2 $ is given by $$\Gamma(m,n)=(\theta_{1}(m),\theta_{2}(n)),$$
which signifies that $(G_1\oplus G_2,[.,.]_{G_1\oplus G_2},\Gamma)$ is also a Hom-Lie superalgebra.

\section{isoclinism}
Now onwards we will use $(G,\theta)$ to symbolize Hom-Lie superalgebra. The following generalizations are applicable only in the case of regular Hom-Lie superalgebras by Lemma \ref{l3}. 
\par Now we define isoclinism for Hom-Lie superalgebras.
\begin{definition}\label{d7}
	Consider two regular Hom-Lie superalgebras $(G_1,\theta_1)$ and $(G_2,\theta_2)$. Suppose $\mu :\frac{G_1}{Z(G_1)}\rightarrow \frac{G_2}{Z(G_2)}$ and $\nu :G_1^{'} \rightarrow G_2^{'}$ are two Hom-Lie superalgebra homomorphisms such that:
	\begin{center}
		\begin{tikzpicture}[>=latex]
			\node (x) at (0,0) {\(\frac{G_1}{Z(G_1)}\times \frac{G_1}{Z(G_1)} \)};
			\node (z) at (0,-2) {\(\frac{G_2}{Z(G_2)}\times \frac{G_2}{Z(G_2)}\)};
			\node (y) at (3,0) {\(G_1'\)};
			\node (w) at (3,-2) {\(G_2',\)};
			\draw[->] (x) -- (y) node[midway,above] {$\sigma$};
			\draw[->] (x) -- (z) node[midway,left] {$\mu ^{2} $};
			\draw[->] (z) -- (w) node[midway,below] {$\rho$};
			\draw[->] (y) -- (w) node[midway,right] {$\nu$};
		\end{tikzpicture}\\
	\end{center} 
	\noindent commutes, where $\sigma( \overline{m_{1}},\overline{m_{2}}):=[\overline{m_{1}},\overline{m_{2}}]$ and $\rho( \overline{n_{1}},\overline{n_{2}}):= [\overline{n_{1}},\overline{n_{2}}]$ for $ m_1,m_2 \in G_1 $ and $n_{1},n_2 \in G_2$. Then the pair $(\mu, \nu)$ is said to be {\it homoclinism} and if they are both isomorphisms, then $(\mu, \nu)$ is called {\it isoclinism}.
\end{definition}
\noindent $G_1$ and $G_2$ are said to be isoclinic if $(\mu,\nu)$ is an isoclinism between $G_1$ and $G_2$, which is denoted by $G_1\sim G_2$. One can see that the above notion forms an equivalence relation.

\begin{lemma}\label{l8}
	If $(G_1,\theta_1)$ is a regular Hom-Lie superalgebra and $(G_2,\theta_2)$ is an abelian Hom-Lie superalgebra, then $G_1\sim G_1\oplus G_2$.
\end{lemma}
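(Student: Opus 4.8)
The plan is to exhibit the natural candidate isomorphisms and then check that they satisfy the isoclinism square of Definition \ref{d7}. The first and most important step is to identify the center and the derived subalgebra of the direct sum $(G_1\oplus G_2,\Gamma)$. Using the direct-sum bracket together with the hypothesis that $G_2$ is abelian (so $[n_1,n_2]_{G_2}=0$), one computes
$[(m_1,n_1),(m_2,n_2)]_{G_1\oplus G_2}=([m_1,m_2]_{G_1},0)$.
This immediately yields $(G_1\oplus G_2)'=G_1'\oplus\{0\}$ and $Z(G_1\oplus G_2)=Z(G_1)\oplus G_2$, since a pair $(m,n)$ is central exactly when $m\in Z(G_1)$, the $G_2$-component being unconstrained. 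I expect this to be the step where the abelian hypothesis does all the work; everything afterwards is a matter of checking that the obvious maps behave.

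Given these identifications, I would define $\mu:\frac{G_1}{Z(G_1)}\to\frac{G_1\oplus G_2}{Z(G_1\oplus G_2)}$ by $\mu(\overline{m})=\overline{(m,0)}$ and $\nu:G_1'\to(G_1\oplus G_2)'$ by $\nu(x)=(x,0)$. Well-definedness and injectivity of $\mu$ follow from $Z(G_1\oplus G_2)=Z(G_1)\oplus G_2$: a representative $m$ lies in $Z(G_1)$ iff $(m,0)\in Z(G_1)\oplus G_2$. Surjectivity follows because $(0,n)\in Z(G_1\oplus G_2)$ for every $n\in G_2$, so each coset $\overline{(m,n)}$ equals $\overline{(m,0)}=\mu(\overline{m})$. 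The map $\nu$ is plainly a linear bijection onto $G_1'\oplus\{0\}$, and both maps respect the brackets by the bracket computation above.

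Next I would check compatibility with the Hom-structure, i.e.\ that $\mu$ and $\nu$ are Hom-Lie superalgebra homomorphisms in the sense of Definition \ref{d5}. The induced map on $\frac{G_1}{Z(G_1)}$ is $\widetilde{\theta_1}(\overline{m})=\overline{\theta_1(m)}$, while the induced map $\widetilde{\Gamma}$ on the quotient of the direct sum sends $\overline{(m,0)}\mapsto\overline{(\theta_1(m),\theta_2(0))}=\overline{(\theta_1(m),0)}$ (the induced maps exist because centers are Hom-ideals by Lemma \ref{l3}); comparing the two routes gives $\mu\,\widetilde{\theta_1}=\widetilde{\Gamma}\,\mu$. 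The analogous check for $\nu$ uses $\Gamma(x,0)=(\theta_1(x),0)$, giving $\nu\,\theta_1=\Gamma\,\nu$ on $G_1'$, where one uses that $\theta_1$ restricts to $G_1'$ by multiplicativity.

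Finally, I would verify that the isoclinism square commutes, namely $\nu\circ\sigma=\rho\circ\mu^2$. Tracing a pair $(\overline{m_1},\overline{m_2})$, the left route gives $\nu([m_1,m_2]_{G_1})=([m_1,m_2]_{G_1},0)$, while the right route gives $\rho(\overline{(m_1,0)},\overline{(m_2,0)})=[(m_1,0),(m_2,0)]_{G_1\oplus G_2}=([m_1,m_2]_{G_1},0)$, so the two agree. Since $\mu$ and $\nu$ are isomorphisms making the diagram commute, $(\mu,\nu)$ is an isoclinism and $G_1\sim G_1\oplus G_2$. No step presents a genuine difficulty; the only place requiring care is the center computation in the first step, which is precisely where abelianness of $G_2$ is essential.
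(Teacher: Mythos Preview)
Your proposal is correct and follows essentially the same approach as the paper: identify $Z(G_1\oplus G_2)=Z(G_1)\oplus G_2$ and $(G_1\oplus G_2)'=G_1'\oplus\{0\}$, then take $\mu(\overline{m})=\overline{(m,0)}$ and $\nu$ the obvious identification $x\mapsto(x,0)$, checking the Hom-compatibility and the isoclinism square. If anything, your write-up is more explicit than the paper's in verifying bijectivity of $\mu$ and the commutativity $\nu\circ\sigma=\rho\circ\mu^2$.
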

\begin{proof}
	Since $G_2$ is abelian, we have $Z(G_1\oplus G_2)=Z(G_1)\oplus G_2$. Define the map $$\mu :\frac{G_1}{Z(G_1)}\rightarrow \frac{G_1\oplus G_2}{Z(G_1)\oplus G_2},$$ by $$m+Z(G_1)\rightarrow (m,0)+(Z(G_1)\oplus G_2), $$ for $m\in G_1$. It is easy to verify that the map is well defined. Take two homogeneous elements $n_1,n_2\in G_1$, we have $\mu([n_1+Z(G_1),n_2+Z(G_1)])=[\mu(n_1+Z(G_1)),\mu(n_2+Z(G_1))]$ holds, implies that $\mu$ is a homomorphism. Now we have to show that $\mu \tilde{\theta}_1=\tilde{\phi}\mu$, i.e., 
	$$\mu \tilde{\theta_1}(\overline{n})=\mu(\theta(n)+Z(G_1))=(\theta(n),0)+(Z(G_1)\oplus W)=\tilde{\phi}(n,0)+(Z(G_1)\oplus G_2)=\tilde{\phi}\mu(\overline{n}).$$
	Clearly $\mu$ is a bijection, thus an isomorphism. Next we consider the identity map $\nu : G_1^{'} \rightarrow (G_1\oplus G_2)^{'} = G_1^{'}$ then we get the commutative diagram:
	\begin{center}
		\begin{tikzpicture}[>=latex]
			\node (x) at (0,0) {\(\frac{G_1}{Z(G_1)}\times \frac{G_1}{Z(G_1)}\)};
			\node (z) at (0,-3) {\(\frac{G_1\oplus G_2}{Z(G_1)\oplus G_2}\times \frac{G_1\oplus G_2}{Z(G_1)\oplus G_2}\)};
			\node (y) at (4,0) {\(G_1^{'}\)};
			\node (w) at (4,-3) {\(G_1 ^{'}.\)};
			\draw[->] (x) -- (y) node[midway,above] {$\sigma$};
			\draw[->] (x) -- (z) node[midway,left] {$\mu^2$};
			\draw[->] (z) -- (w) node[midway,below] {$\rho$};
			\draw[->] (y) -- (w) node[midway,right] {$\nu$};
		\end{tikzpicture}
	\end{center}
\end{proof}

To proof the below lemmas one may refer \cite{moneyhun1994,nayak2018}. 
\begin{lemma}\label{l9}
	Suppose that $(G,\theta)$ is a regular Hom-Lie superalgebra and $K$ is a Hom-ideal. Then $G/K \sim G/{(K\cap G^{'})}$. In particular, if $K\cap G^{'}=0$ then $G\sim G/K$. Conversely if $G^{'}$ is finite dimensional and $G\sim G/K$ then $K\cap G^{'}=0$.
\end{lemma}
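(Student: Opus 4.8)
The plan is to prove the general isoclinism $G/K \sim G/(K\cap G')$ first and then read off the two special cases. Write $J = K\cap G'$, a Hom-ideal of $G$ contained in both $K$ and $G'$. The whole argument rests on two structural facts, each a consequence of the second isomorphism theorem for Hom-Lie superalgebras together with the trivial but crucial inclusion $[g,G]\subseteq G'$ for every $g\in G$. First, for any Hom-ideal $K$ one has $(G/K)' = (G'+K)/K \cong G'/(G'\cap K) = G'/J$. Second, an element $g+K$ lies in $Z(G/K)$ exactly when $[g,G]\subseteq K$; since $[g,G]$ always sits inside $G'$, this condition is equivalent to $[g,G]\subseteq K\cap G' = J$. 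Hence, writing $Z_J(G)=\{g\in G:[g,G]\subseteq J\}$ (the preimage in $G$ of $Z(G/J)$), I obtain $Z(G/K)=Z_J(G)/K$ and likewise $Z(G/J)=Z_J(G)/J$.

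With these identifications the two maps demanded by Definition \ref{d7} are essentially forced on us. For the derived part, both $(G/J)'$ and $(G/K)'$ are canonically isomorphic to $G'/J$, and I take $\nu\colon (G/J)'\to(G/K)'$ to be the isomorphism induced by the canonical projection $p\colon G/J\to G/K$, $g+J\mapsto g+K$. For the central part, the third isomorphism theorem gives $(G/J)/Z(G/J)\cong G/Z_J(G)\cong (G/K)/Z(G/K)$, and I take $\mu$ to be the induced isomorphism, again coming from $p$. Because $p$ intertwines the maps $\tilde\theta$ induced on the quotients, so do $\mu$ and $\nu$; and since both legs of the isoclinism square merely compute a commutator in $G$ and reduce it modulo $K$, the square commutes. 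This yields $G/J\sim G/K$, which is the first assertion. Taking $K\cap G'=0$ gives $J=0$ and $G/J=G$, so the displayed relation collapses to $G\sim G/K$, the \emph{in particular} clause.

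For the converse, suppose $G'$ is finite dimensional and $G\sim G/K$. By definition an isoclinism supplies an isomorphism $\nu\colon G'\to (G/K)'$, in particular of the underlying superspaces, so $\dim G' = \dim (G/K)'$. On the other hand the first structural fact above gives $(G/K)'\cong G'/(K\cap G')$, whence $\dim (G/K)' = \dim G' - \dim (K\cap G')$. Comparing the two, $\dim(K\cap G')=0$, that is $K\cap G'=0$. Finite dimensionality of $G'$ is used precisely to license this cancellation.

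The routine parts --- that $\mu$ and $\nu$ are genuine homomorphisms of Hom-Lie superalgebras (bracket-preserving and commuting with $\tilde\theta$) and that they are well defined on the quotients --- follow directly from the functoriality of the quotient construction and need only be checked, not discovered. The one step that genuinely needs care, and which I regard as the heart of the proof, is the identification of the centers of the two quotients: the passage from $[g,G]\subseteq K$ to $[g,G]\subseteq K\cap G'$ is exactly what makes $Z(G/K)$ and $Z(G/J)$ share the common preimage $Z_J(G)$, thereby forcing the two central quotients to coincide. Everything else is bookkeeping around this single observation.
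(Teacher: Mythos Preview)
Your proof is correct. The paper does not actually supply a proof of this lemma; it merely says ``To proof the below lemmas one may refer \cite{moneyhun1994,nayak2018}.'' Your argument is exactly the standard one found in those references, transported to the regular Hom-Lie super setting: the key observation that $[g,G]\subseteq K$ is equivalent to $[g,G]\subseteq K\cap G'$ identifies the preimages of $Z(G/K)$ and $Z(G/J)$ as the same subspace $Z_J(G)$, after which the third isomorphism theorem handles the central quotients, the second isomorphism theorem handles the derived subalgebras, and a dimension count gives the converse.
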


\begin{corollary}\label{c10}
	Consider two regular Hom-Lie superalgebras $(G_1,\theta_1)$ and $(G_2,\theta_2)$. If $f:G_1\rightarrow G_2$ is an onto homomorphism such that $Ker(f)\cap G^{'}=0$, then $f$ induces an isoclinism between $G_1$ and $G_2$.  
\end{corollary}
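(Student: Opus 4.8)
The plan is to reduce the statement to Lemma \ref{l9} together with the first isomorphism theorem for Hom-Lie superalgebras, using the fact that isoclinism is an equivalence relation. Write $K:=Ker(f)$. First I would check that $K$ is a Hom-ideal of $(G_1,\theta_1)$: closure under the bracket with $G_1$ follows from $f([k,n]_1)=[f(k),f(n)]_2=0$ for $k\in K$, $n\in G_1$, and the invariance $\theta_1(K)\subseteq K$ follows from $f\theta_1=\theta_2 f$, since $f(\theta_1(k))=\theta_2(f(k))=0$. Thus the quotient $(G_1/K,\tilde\theta_1)$ is a well-defined regular Hom-Lie superalgebra.

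Next, since $f$ is onto, the induced even linear map $\overline{f}:G_1/K\to G_2$, $\overline{f}(n+K):=f(n)$, is a bijective bracket-preserving map; the only point needing attention is that it intertwines the Hom-maps, i.e. $\overline{f}\,\tilde\theta_1=\theta_2\,\overline{f}$, which is immediate from $f\theta_1=\theta_2 f$ and the definition $\tilde\theta_1(n+K)=\theta_1(n)+K$. Hence $G_1/K\cong G_2$ as regular Hom-Lie superalgebras. An isomorphism is in particular an isoclinism: one takes $\mu$ and $\nu$ to be the maps $G_1/Z(G_1)\to G_2/Z(G_2)$ and $G_1'\to G_2'$ induced by $\overline{f}$, and the square of Definition \ref{d7} commutes precisely because $\overline{f}$ preserves the graded bracket. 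Therefore $G_1/K\sim G_2$.

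Now I invoke the hypothesis $K\cap G_1'=0$. By the second part of Lemma \ref{l9} this gives $G_1\sim G_1/K$. Combining this with $G_1/K\sim G_2$ and using that $\sim$ is an equivalence relation (transitivity), I conclude $G_1\sim G_2$; that is, $f$ induces an isoclinism between $G_1$ and $G_2$.

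I expect the main obstacle to be bookkeeping rather than anything conceptual: one must confirm that the first isomorphism theorem genuinely survives in the Hom-Lie superalgebra category, i.e. that $\overline{f}$ respects both the graded bracket and the Hom-map $\theta$, and that the explicit pair $(\mu,\nu)$ extracted from an isomorphism really satisfies the commuting-square condition of Definition \ref{d7}. Alternatively, the corollary can be proved directly by constructing $(\mu,\nu)$ from $f$ without passing through the quotient: set $\mu(n+Z(G_1)):=f(n)+Z(G_2)$ and $\nu:=f|_{G_1'}$. In that route the single delicate point—and the only place the hypothesis is actually used—is the injectivity of $\mu$: if $f(n)\in Z(G_2)$ then $f([n,m]_1)=[f(n),f(m)]_2=0$ for all $m\in G_1$ (here onto-ness of $f$ is needed to reach every element of $G_2$), so $[n,m]_1\in K\cap G_1'=0$, forcing $n\in Z(G_1)$. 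Surjectivity of $\mu$ and bijectivity of $\nu$ then follow routinely from $f$ being onto together with $K\cap G_1'=0$.
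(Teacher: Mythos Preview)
Your argument is correct and is precisely the derivation the paper intends: the corollary is stated immediately after Lemma~\ref{l9} with no separate proof, so the paper's ``proof'' is exactly your reduction---take $K=\mathrm{Ker}(f)$, use the first isomorphism theorem to get $G_1/K\cong G_2$, apply the ``in particular'' clause of Lemma~\ref{l9} to get $G_1\sim G_1/K$, and conclude by transitivity. The only tiny omission is that regularity of $G_1/K$ (needed for Definition~\ref{d7} and Lemma~\ref{l9}) follows either from $\theta_1(K)=K$ (using bijectivity of $\theta_2$) or directly from the isomorphism $G_1/K\cong G_2$ with $G_2$ regular.
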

\begin{lemma}\label{l11}
	Suppose $\mathcal{C}$ is an isoclinic family of regular Hom-Lie superalgebas. Then
	\begin{enumerate}
		\item $\mathcal{C}$ contains a stem Hom-Lie superalgebra.
		\item Each finite dimensional Hom-Lie superalgebra $T\in \mathcal{C}$ is stem if and only if $T$ has minimal dimension in $\mathcal{C}$.
	\end{enumerate}
\end{lemma}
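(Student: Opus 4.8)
My plan is to reduce everything to two numerical invariants of the class $\mathcal{C}$ and to realise a stem member as a central quotient. Write $U(G)=Z(G)\cap G'$. First I would record that an isoclinism $(\mu,\nu)$ between $(G_1,\theta_1)$ and $(G_2,\theta_2)$ forces $\dim(G_1/Z(G_1))=\dim(G_2/Z(G_2))$ and $\dim G_1'=\dim G_2'$, since $\mu,\nu$ are isomorphisms. As $\mu$ sends the derived subalgebra of $G_1/Z(G_1)$ onto that of $G_2/Z(G_2)$, and $(G/Z(G))'=(G'+Z(G))/Z(G)\cong G'/U(G)$ by the second isomorphism theorem, the quantity $\dim(G/Z(G))'$ is a class invariant, and hence so is $\dim U(G)=\dim G'-\dim(G/Z(G))'$. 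I set $d:=\dim(G/Z(G))$ and $f:=\dim U(G)$; both are constant on $\mathcal{C}$.

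For the finite-dimensional $T\in\mathcal{C}$ of part (2) I would then write $\dim T=d+\dim Z(T)$ and use $\dim Z(T)\ge\dim U(T)=f$, with equality exactly when $Z(T)\subseteq T'$, i.e. when $T$ is stem (Definition \ref{d4}). Hence $\dim T\ge d+f$ for every member of $\mathcal{C}$, with equality if and only if $T$ is stem; since $d+f$ does not depend on $T$, this says precisely that $T$ has minimal dimension in $\mathcal{C}$ if and only if it is stem, which is (2).

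For (1) I would start from an arbitrary $G\in\mathcal{C}$, choose a graded $\theta$-invariant complement $K$ to $U(G)$ inside $Z(G)$, so that $Z(G)=U(G)\oplus K$ with $\theta(K)\subseteq K$, and then pass to $G/K$. Since $K\subseteq Z(G)$ we have $[K,G]=0$, so $K$ is a Hom-ideal, and $K\cap G'\subseteq U(G)\cap K=0$; thus Lemma \ref{l9} yields $G\sim G/K$ and therefore $G/K\in\mathcal{C}$. Finally I would check that $G/K$ is stem: from $K\cap G'=0$ one sees that $g+K\in Z(G/K)$ forces $[g,G]\subseteq K\cap G'=0$, whence $Z(G/K)=Z(G)/K$, while $(G/K)'=(G'+K)/K\supseteq(U(G)+K)/K=Z(G)/K$, so $Z(G/K)\subseteq(G/K)'$.

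The hard part is precisely the choice of a $\theta$-invariant complement $K$ in the argument for (1): for an arbitrary invertible $\theta$ an invariant subspace of the centre need not admit an invariant complement (a single Jordan block already fails), so this is where regularity of $\theta$ must genuinely be used, for instance through semisimplicity of the restriction $\theta|_{Z(G)}$ of $\theta$ to the finite-dimensional centre. Everything else is bookkeeping with the two invariants $d$ and $f$; I would also note that when $\mathcal{C}$ contains a finite-dimensional member, part (1) follows immediately from part (2), a member of minimal dimension being automatically stem.
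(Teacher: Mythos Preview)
The paper gives no self-contained proof here; it simply refers to the Lie-algebra and Lie-superalgebra sources \cite{moneyhun1994,nayak2018}, so your route is exactly the intended one, and your invariants $d=\dim(G/Z(G))$, $f=\dim U(G)$ together with the inequality $\dim T\ge d+f$ (equality precisely when $T$ is stem) are the standard ingredients.

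The obstruction you flag in (1) is, however, real and is \emph{not} removed by regularity: bijectivity of $\theta$ does not make $\theta|_{Z(G)}$ semisimple. Concretely, let $G=\langle x,y,z,a\rangle$ (all even) with the only nontrivial bracket $[x,y]=z$, and set $\theta(x)=x$, $\theta(y)=y$, $\theta(z)=z$, $\theta(a)=z+a$. Then $(G,[\cdot,\cdot],\theta)$ is regular and multiplicative, $Z(G)=\langle z,a\rangle$, $U(G)=\langle z\rangle$, and $\theta|_{Z(G)}$ is a single $2\times 2$ Jordan block, so $U(G)$ admits no $\theta$-invariant complement in $Z(G)$ and your quotient construction cannot be carried out for this $(G,\theta)$. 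Your proposed fallback, deducing (1) from (2) in the finite-dimensional case, is circular: the implication ``minimal dimension $\Rightarrow$ stem'' in (2) already requires the bound $d+f$ to be attained, i.e.\ requires (1). A clean repair is to first perturb the twist inside the class: for any even linear $c\colon G\to Z(G)$ with $c|_{G'}=0$, the map $\theta':=\theta+c$ still satisfies Hom--Jacobi (which sees $\theta$ only modulo $Z(G)$) and multiplicativity (which sees $\theta$ only on $G'$), is again bijective because $\theta^{-1}c$ has image in $U(G)$ and squares to zero, and $(G,\theta')$ is isoclinic to $(G,\theta)$ via $\mu=\nu=\mathrm{id}$ since $\tilde\theta'=\tilde\theta$ and $\theta'|_{G'}=\theta|_{G'}$. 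Choosing a graded complement $K_0$ of $U(G)$ in $Z(G)$ and letting $c$ kill the $U(G)$-component of $\theta|_{K_0}$ makes $K_0$ $\theta'$-invariant; then your Lemma~\ref{l9} argument applied to $(G,\theta')$ produces the required stem member, after which your proof of (2) goes through.
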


\begin{lemma}\label{l12}
	Suppose $(\mu,\nu)$ is an isoclinism of Hom-Lie superalgebras $G_1$ and $G_2$. Then the following statements hold: 
	\begin{enumerate}
		\item $\mu(n+Z(G_1))=\nu(n)+Z(G_2)$,
		\item $\nu([n_1,n_2])=[\nu(n_1),n_3]$ for $n_1\in G_1^{'},~n_2\in G_1$, and $n_3+Z(G_2)=\mu(n_2+Z(G_1
		))$.
	\end{enumerate}
\end{lemma}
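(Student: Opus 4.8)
The plan is to establish part (1) first and then read off part (2) almost for free, since (1) carries the real content. Throughout I will keep a sharp distinction between the two brackets in play: the bracket of the quotient superalgebra $G_i/Z(G_i)$ and the ambient bracket of $G_i$, the latter being what $\sigma$ and $\rho$ actually evaluate (landing in $G_i'$, well-defined because bracketing annihilates the center). The two inputs I will lean on are the commutativity of the isoclinism square, $\nu\circ\sigma=\rho\circ\mu^2$, and the fact that, by Definition \ref{d7}, $\mu$ is itself a Hom-Lie superalgebra homomorphism. Note that in (1) the element $n$ must lie in $G_1'$, since $\nu(n)$ is only defined there; both sides then live in $G_2/Z(G_2)$.

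For (1), because $\nu$ and the map $n\mapsto\mu(n+Z(G_1))$ are both linear in $n$ and $G_1'$ is spanned by brackets, it suffices to treat $n=[a,b]$ with $a,b\in G_1$ homogeneous. Choose representatives so that $\mu(a+Z(G_1))=a'+Z(G_2)$ and $\mu(b+Z(G_1))=b'+Z(G_2)$ with $a',b'\in G_2$. Commutativity of the square gives $\nu([a,b])=\nu(\sigma(\overline{a},\overline{b}))=\rho(\mu\overline{a},\mu\overline{b})=[a',b']$. Separately, since $\mu$ is a homomorphism and the quotient bracket obeys $[\overline{a},\overline{b}]=\overline{[a,b]}$, I obtain $\mu(n+Z(G_1))=\mu([\overline{a},\overline{b}])=[\mu\overline{a},\mu\overline{b}]=[a',b']+Z(G_2)$. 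Comparing the two computations yields $\mu(n+Z(G_1))=\nu(n)+Z(G_2)$, and extending by linearity across $G_1'$ completes the argument. The points demanding care are the representative-independence that makes $\sigma,\rho,\mu$ simultaneously well-defined and the bookkeeping separating the two brackets; the $\mathbb{Z}_2$-grading contributes nothing essential here, as $\mu$ and $\nu$ are even.

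For (2), fix $n_1\in G_1'$, $n_2\in G_1$, and $n_3\in G_2$ with $n_3+Z(G_2)=\mu(n_2+Z(G_1))$. Since $G_1'$ is a Hom-ideal, $[n_1,n_2]\in G_1'$, so $\nu([n_1,n_2])$ is defined, and commutativity again gives $\nu([n_1,n_2])=\rho(\mu\overline{n_1},\mu\overline{n_2})$. I now substitute the two cosets: by part (1), $\mu\overline{n_1}=\nu(n_1)+Z(G_2)$ (this is precisely where (1) is invoked), while $\mu\overline{n_2}=n_3+Z(G_2)$ by the choice of $n_3$. Hence $\rho(\mu\overline{n_1},\mu\overline{n_2})=[\nu(n_1),n_3]$, which is exactly the claim.

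I anticipate no genuine obstacle: both statements reduce to unwinding the commuting square together with the homomorphism property of $\mu$. The most error-prone step is (1), where one must avoid conflating the quotient bracket with the ambient bracket and must verify the representative-independence underlying the well-definedness of $\sigma$, $\rho$, and $\mu$; once that is secured, (2) drops out immediately from (1).
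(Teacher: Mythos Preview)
The paper does not supply its own proof of this lemma; it groups Lemma~\ref{l12} with the surrounding results and refers the reader to \cite{moneyhun1994,nayak2018} for the arguments. Your proof is correct and is exactly the standard unwinding of the isoclinism square that one finds in those references: for (1) you reduce to generators $n=[a,b]$ of $G_1'$, compute $\nu(n)$ via $\nu\sigma=\rho\mu^2$ and $\mu(\overline{n})$ via the homomorphism property of $\mu$, and compare; for (2) you feed (1) back into the same square. Your explicit caveat that (1) only makes sense for $n\in G_1'$ is a worthwhile clarification that the paper's statement leaves implicit.
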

\begin{lemma}\label{l13}
	Let $(G_1,\theta)$ be a regular Hom-Lie superalgebra and $G_1= G_2 \oplus Z(G_1)$ then $\theta(G_2)\subseteq G_2$. 
\end{lemma}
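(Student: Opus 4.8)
The plan is to build everything on Lemma~\ref{l3}, which tells us that $Z(G_1)$ is a Hom-ideal and in particular $\theta\big(Z(G_1)\big)\subseteq Z(G_1)$. First I would upgrade this to the statement that $\theta$ restricts to an automorphism of $Z(G_1)$: regularity makes $\theta$ injective, hence injective on the subspace $Z(G_1)$, and in the finite-dimensional setting relevant here an injective endomorphism of $Z(G_1)$ is onto, so $\theta\big(Z(G_1)\big)=Z(G_1)$. This is exactly what is needed for the induced map $\tilde\theta$ on $G_1/Z(G_1)$ (constructed before Definition~\ref{d7}) to be well defined and bijective, and for the canonical projection $\pi\colon G_1\to G_1/Z(G_1)$ to satisfy $\pi\theta=\tilde\theta\pi$.

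Next I would bring in the decomposition $G_1=G_2\oplus Z(G_1)$, reading $G_2$ as a sub superspace (closed under the bracket). The restriction $\pi|_{G_2}\colon G_2\to G_1/Z(G_1)$ is then a linear isomorphism. For a homogeneous $g\in G_2$ write the unique decomposition $\theta(g)=h+w$ with $h\in G_2$ and $w\in Z(G_1)$. Applying $\pi$ and using $\pi\theta=\tilde\theta\pi$ yields $\pi(h)=\tilde\theta\big(\pi(g)\big)$, so $h=(\pi|_{G_2})^{-1}\tilde\theta\big(\pi(g)\big)$ is completely determined; this is the routine half of the argument, and it shows $\theta(g)$ and $h$ agree modulo $Z(G_1)$.

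The crux is then to show that the central component $w$ vanishes, i.e.\ that the section $s:=(\pi|_{G_2})^{-1}$ whose image is $G_2$ actually intertwines $\theta$ with $\tilde\theta$. Here I would try to exploit the bracket structure rather than the quotient alone: since $Z(G_1)$ is central, $G_1'=[G_1,G_1]=[G_2,G_2]\subseteq G_2$, and multiplicativity gives $\theta(G_1')\subseteq G_1'\subseteq G_2$. One then tracks how the central drift $w$ attached to each generator must respect the bracket relations of $G_2$ together with the fixed behaviour of $\theta$ on $G_1'$ and on $Z(G_1)$, aiming to force $w=0$ from the rigidity of a regular (hence invertible) $\theta$.

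I expect this last step to be the genuine obstacle. Controlling $w$ from the quotient data is impossible in principle, because a subspace complement of a $\theta$-invariant subspace need not itself be $\theta$-invariant; the statement is really the assertion that $G_2$ is the image of a $\theta$-equivariant splitting of $0\to Z(G_1)\to G_1\to G_1/Z(G_1)\to 0$. Thus the proof must make essential use of the fact that $G_2$ is closed under the bracket, that $G_1'\subseteq G_2$, and that $\theta$ is bijective and multiplicative; isolating precisely which of these forces the equivariance of the splitting (and under what additional hypothesis on $G_2$, such as its being the natural lift making the diagram of Definition~\ref{d7} commute) is where the real care is required.
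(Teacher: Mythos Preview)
Your proposal stops short of a proof, and you correctly flag the missing step: forcing the central component $w$ in $\theta(g)=h+w$ to vanish. The paper's argument is entirely different and much shorter: it supposes $\theta(w)\in Z(G_1)$ for some nonzero $w\in G_2$, uses surjectivity of $\theta$ to write any $y\in G_1$ as $\theta(x)$, computes $0=[\theta(w),y]=\theta([w,x])$, and then injectivity of $\theta$ forces $w\in Z(G_1)$, contradicting $G_2\cap Z(G_1)=0$; from this the paper concludes $\theta(w)\in G_2$.

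Your caution is justified, because that last inference is a non sequitur: the contradiction argument only establishes $\theta(G_2)\cap Z(G_1)=0$, not $\theta(G_2)\subseteq G_2$. In your notation it rules out the case $h=0,\ w\neq 0$ but says nothing about $h\neq 0,\ w\neq 0$. In fact the statement is false for an arbitrary complement, even one closed under the bracket: take the purely even three-dimensional algebra with basis $x,y,z$, relations $[x,y]=y$ and $z$ central, and set $\theta(x)=x+z$, $\theta(y)=y$, $\theta(z)=z$. This is regular and multiplicative, $Z(G_1)=\langle z\rangle$, $G_2=\langle x,y\rangle$ is a bracket-closed complement, yet $\theta(x)=x+z\notin G_2$. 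So neither your route nor the paper's can be completed without an additional hypothesis on $G_2$; your suspicion that extra input (a $\theta$-equivariant choice of splitting) is genuinely required is exactly the point.
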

\begin{proof}
	Suppose $\theta(w) \in Z(G_2)$ for some $0\neq w \in G_2$. For $y \in G_1$ there is a homogeneous element $x \in G_1$ such that $\theta(x)=y$. Thus $0=[\theta(w),y]=[\theta(w),\theta(x)]=\theta([w,x]).$ Since $\theta$ is injective, $[w,x]=0$ for $x \in G_1$. Thus $u \in Z(G_1)$, which is not true. Therefore $\theta(w)\in G_2$.
\end{proof}

\section{Factor set in regular Hom-Lie superalgebras} 
From Lemma \ref{l3}, it can be concluded that if $(G,\theta)$ is regular then $Z(G)$ is a Hom-ideal. For this reason we study factor set only for regular Hom-Lie superalgebras.
\begin{definition}\label{d14}
	A finite dimensional Hom-Lie superalgebra is a triple $(G, [.,.],{\theta})$ over a field $\mathbb{F}$ equipped with a bilinear map;
	$$r:G/Z(G)\times G/Z(G)\rightarrow Z(G),$$ is said to be a factor set if the following properties hold:
	\begin{enumerate}
		\item $r(\overline {n_1} ,\overline {n_2})=-(-1)^{|\overline {n_1}||\overline {n_2}|} r(\overline{n_2},\overline{n_1}),$
		\item $r([\overline{n_1},\overline{n_2}],\tilde {\theta}({\overline{n_3}}))=r(\tilde{\theta}(\overline{n_1}),[\overline{n_2},\overline{n_3}])-(-1)^{|\overline{n_1}||\overline{n_2}|}r(\tilde{\theta}(\overline{n_2}),[\overline{n_1},\overline{n_3}]),$
	\end{enumerate}
	\noindent for homogeneous elements $\overline{n_1},\overline{n_2},\overline{n_3} \in G/Z(G)$ and $\tilde{\theta}$ is a homomorphism $\tilde{\theta} : G/Z(G)\rightarrow G/Z(G) $ satisfying $\tilde{\theta}(\overline{n})=\theta(n)+Z(G)$. The factor set $r$ is said to be  multiplicative if $$r(\tilde{\theta}(\overline{n_1}),\tilde{\theta}(\overline{n_2}))=\theta r(\overline{n_1},\overline{n_2}),$$ for $\overline{n_1},\overline{n_2}\in G/Z(G)$.
\end{definition}
Next Lemma generate a new Hom-Lie superalgebra from a given regular Hom-Lie superalgebra and a factor set on it.
\begin{lemma}\label{l15}
	Suppose $(G,{\theta})$ is a Hom-Lie superalgebra and $r$ is a factor set on it. Set $R=(Z(G),G/Z(G),r)=\{(g,\overline{n}):g\in Z(G),~\overline{n}\in G/Z(G)\}$.
	\item (1) $(R,\phi)$ is a Hom-Lie superalgebra under the component-wise addition  $$[(g_1,\overline{n_1}),(g_2,\overline{n_2})]:=(r(\overline{n_1},\overline{n_2}),[\overline{n_1},\overline{n_2}]),$$
	and the linear map $\phi:R\rightarrow R$ is given by $$\phi(g,\overline{n})=(\theta(g),\tilde{\theta}(\overline{n})).$$
	\item (2) If the factor set $r$ is multiplicative then $(R,\phi)$ is regular.
	\item (3) $Z_{R}=\{(g,0) \in R:x \in Z(G)\} \cong Z(G).$
\end{lemma}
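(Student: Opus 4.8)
The plan is to assemble $(R,\phi)$ as a central-extension-type object built from $Z(G)$ and $G/Z(G)$ glued by $r$, and to verify the three claims coordinate-wise. For (1), I would first grade $R=Z(G)\oplus G/Z(G)$ by $R_{\bar\gamma}=Z(G)_{\bar\gamma}\oplus (G/Z(G))_{\bar\gamma}$, so that a homogeneous element $(g,\overline n)$ has $|g|=|\overline n|=|(g,\overline n)|$; since $r$ and the induced bracket on $G/Z(G)$ are parity-preserving, the prescribed bracket on $R$ is bilinear and lands in the correct parity. Graded skew-symmetry of the $R$-bracket then drops out coordinate-wise from property (1) of Definition \ref{d14} together with the graded skew-symmetry of the bracket on $G/Z(G)$, both of which contribute the same sign $-(-1)^{|\overline{n_1}||\overline{n_2}|}$. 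Finally $\phi=\theta|_{Z(G)}\oplus\tilde\theta$ is even and linear because $\theta$ and $\tilde\theta$ are.

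The substantive step, and the one I expect to be the main obstacle, is the graded Hom-Jacobi identity for $(R,\phi)$. Writing $a_i=(g_i,\overline{n_i})$, I would expand $[\phi(a_1),[a_2,a_3]]$ and its two cyclic permutations and read off the two coordinates separately. The $G/Z(G)$-coordinate reproduces exactly the graded Hom-Jacobi identity of the quotient Hom-Lie superalgebra $G/Z(G)$ and hence vanishes. The $Z(G)$-coordinate produces the cyclic sum
\[
(-1)^{|\overline{n_1}||\overline{n_3}|}r(\tilde\theta(\overline{n_1}),[\overline{n_2},\overline{n_3}])+(-1)^{|\overline{n_3}||\overline{n_2}|}r(\tilde\theta(\overline{n_3}),[\overline{n_1},\overline{n_2}])+(-1)^{|\overline{n_2}||\overline{n_1}|}r(\tilde\theta(\overline{n_2}),[\overline{n_3},\overline{n_1}]),
\]
and showing that this equals $0$ is precisely what condition (2) of Definition \ref{d14} encodes, once one rewrites that condition using the skew-symmetries (1) of $r$ and of the bracket to line up all three cyclic terms. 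This sign bookkeeping is the one place requiring genuine care.

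For (2), assuming $r$ multiplicative, I would check $\phi([a_1,a_2])=[\phi(a_1),\phi(a_2)]$ coordinate-wise: the $G/Z(G)$-coordinate needs $\tilde\theta[\overline{n_1},\overline{n_2}]=[\tilde\theta(\overline{n_1}),\tilde\theta(\overline{n_2})]$, inherited from multiplicativity of $\theta$ passing to the quotient, while the $Z(G)$-coordinate needs $\theta r(\overline{n_1},\overline{n_2})=r(\tilde\theta(\overline{n_1}),\tilde\theta(\overline{n_2}))$, which is exactly the multiplicativity hypothesis on $r$; hence $\phi$ is multiplicative. For bijectivity I would use that $G$ is regular: $\theta$ restricts to a bijection of $Z(G)$, and since $\theta(n)\in Z(G)$ forces $n\in Z(G)$ (the injectivity-plus-multiplicativity argument already used in Lemma \ref{l13}), $\theta$ descends to a bijection $\tilde\theta$ of $G/Z(G)$. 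Thus $\phi$ is bijective and $(R,\phi)$ is regular.

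For (3), the map $\psi\colon Z(G)\to Z_R$, $g\mapsto(g,0)$, is an even linear bijection onto $Z_R=\{(g,0)\}$. I would verify it is a Hom-Lie superalgebra isomorphism: both sides carry the trivial bracket, since $[(g_1,0),(g_2,0)]=(r(0,0),[0,0])=0$ and $Z(G)$ is abelian as a subalgebra, and $\psi$ intertwines the structure maps because $\phi(g,0)=(\theta(g),0)=\psi(\theta(g))$. I would also record that each $(g,0)$ is genuinely central in $R$, as $r(0,\overline{n_2})=0$ and $[0,\overline{n_2}]=0$ by bilinearity, which justifies the notation $Z_R$.
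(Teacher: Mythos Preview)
Your proposal is correct and follows essentially the same approach as the paper: both verify graded skew-symmetry and the graded Hom-Jacobi identity coordinate-wise, with the $G/Z(G)$-coordinate reducing to the quotient identity and the $Z(G)$-coordinate reducing to condition (2) of Definition~\ref{d14}. The only cosmetic difference is that the paper expands the Hom-Jacobi identity in the form $[[a,b],\phi(c)]$ with the appropriate signs while you use the equivalent $[\phi(a),[b,c]]$ form; and in part (2) you are in fact more careful than the paper, which checks multiplicativity of $\phi$ but does not spell out the bijectivity argument you supply, while part (3) is simply declared obvious in the paper.
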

\begin{proof}
	The map $[.,.]$ is linear. Let $(g_1,\overline{n_1})$ and $(g_2,\overline{n_2})$ be two homogeneous elements in $R$, then $|g_1|=|\overline{n_1}|=|(g_1,\overline{n_1})|$ and $|g_2|=|\overline{n_2}|=|(g_2,\overline{n_2})|$. To check the graded skew-symmetric property, consider
	\begin{equation*}
		\begin{split}
			[(g_1,\overline{n_1}),(g_2,\overline{n_2})]&=-(-1)^{|\overline{n_1}||\overline{n_2}|}(r(\overline{n_2},\overline{n_1}),[\overline{n_2},\overline{n_1}])\\
			&=-(-1)^{|\overline{n_1}||\overline{n_2}|}[(g_2,\overline{n_2}),(g_1,\overline{n_1})]\\
			&=-(-1)^{|(g_1,\overline{n_1})||(g_2,\overline{n_2})|}[(g_2,\overline{n_2}),(g_1,\overline{n_1})].
		\end{split}	
	\end{equation*}
	To check the graded Hom-Jacobi identity, take
	\begin{align*}
		&(-1)^{|\overline{n_1}||\overline{n_3}|} [[(g_1,\overline{n_1}),(g_2,\overline{n_2})],\theta(g_3,\overline{n_3})]+(-1)^{|\overline{n_2}||\overline{n_1}|}[[(g_2,\overline{n_2}),(g_3,\overline{n_3})],\theta(g_1,\overline{n_1})]\\
		&\hspace{1cm}+(-1)^{|\overline{n_3}||\overline{n_2}|}[[(g_3,\overline{n_3}),(g_1,\overline{n_1})],\theta(g_2,\overline{n_2})]\\
		&=(-1)^{|\overline{n_1}||\overline{n_3}|}[(r(\overline{n_1},\overline{n_2}),[\overline{n_1},\overline{n_2}]),(\theta(g_3),\tilde{\theta}(\overline{n_3}))]+(-1)^{|\overline{n_2}||\overline{n_1}|}[(r(\overline{n_2},\overline{n_3}),[\overline{n_2},\overline{n_3}]),(\theta(g_1),\tilde{\theta}(\overline{n_1}))]\\
		&\hspace{1cm}+(-1)^{|\overline{n_3}||\overline{n_2}|}[(r(\overline{n_3},\overline{n_1}),[\overline{n_3},\overline{n_1}]),(\theta(g_2),\tilde{\theta}(\overline{n_2}))]\\
		&=(-1)^{|\overline{n_1}||\overline{n_3}|}((r(\overline{n_1},\overline{n_2}),\tilde{\theta}(\overline{n_3})),[[\overline{n_1},\overline{n_2}],\tilde{\theta}(\overline{n_3})])+(-1)^{|n_2||n_1|}((r(\overline{n_2},\overline{n_3}),\tilde{\theta}(\overline{n_1})),[[\overline{n_2},\overline{n_3}],\tilde{\theta}(\overline{n_1})])\\
		&\hspace{1cm}+(-1)^{|n_3||n_2|}((r(\overline{n_3},\overline{n_1}),\tilde{\theta}(\overline{n_2})),[[\overline{n_3},\overline{n_1}],\tilde{\theta}(\overline{n_2})])\\
		&=\big((-1)^{|\overline{n_1}||\overline{n_3}|}(r(\overline{n_1},\overline{n_2}),\tilde{\theta}(\overline{n_3}))+(-1)^{|\overline{n_1}||\overline{n_2}|}(r(\overline{n_2},\overline{n_3}),\tilde{\theta}(\overline{n_1}))+(-1)^{|\overline{n_2}||\overline{n_3}|}(r(\overline{n_3},\overline{n_1}),\tilde{\theta}(\overline{n_2})),\\&\hspace{1cm}(-1)^{|\overline{n_1}||\overline{n_3}|}[[\overline{n_1},\overline{n_2}],\tilde{\theta}(\overline{n_3})]+(-1)^{|\overline{n_2}||\overline{n_1}|}[[\overline{n_2},\overline{n_3}],\tilde{\theta}(\overline{n_1})]+(-1)^{|\overline{n_3}||\overline{n_2}|}[[\overline{n_3},\overline{n_1}],\tilde{\theta}(\overline{n_2})]\big)\\
		&=(0,\overline{0}),
	\end{align*}
	which implies that $(R,\phi)$ is a Hom-Lie superalgebra. Now suppose that $r$ is multiplicative, then $$\phi([(g_1,\overline{n_1}),(g_2,\overline{n_2})])=\phi(r(\overline{n_1},\overline{n_2}),[\overline{n_1},\overline{n_2}])=(\theta r(\overline{n_1},\overline{n_2}),\tilde{\theta}[\overline{n_1},\overline{n_2}]),$$
	and $$[\phi(g_1,\overline{n_1}),\phi(g_2,\overline{n_2})]=\phi[(\theta(g_1),\tilde{\theta}(\overline{n_1})),(\theta(g_2),\tilde{\theta}(\overline{n_2}))]=(r(\tilde{\theta}(n_1),\tilde{\theta}(n_2)),[\tilde{\theta}(n_1),\tilde{\theta}(n_2)]).$$
	As $(G/Z(G),\tilde{\theta})$ is multiplicative, hence $(R,\phi)$ is regular.
	The proof of (3) is obvious.
\end{proof}

Next Lemma indicates that every regular Hom-Lie superalgebra has a factor set.
\begin{lemma}\label{l16}
	For any regular Hom-Lie superalgebra $(G, {\theta})$, there is a factor set $r$ in such a way that $G\cong (Z(G),G/Z(G),r)$.
\end{lemma}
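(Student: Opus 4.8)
The plan is to realize $G$ as the Hom-Lie superalgebra built from $Z(G)$ and $G/Z(G)$ through a factor set coming from a splitting of the natural projection $\pi:G\to G/Z(G)$. Since $Z(G)$ is a graded Hom-ideal (Lemma \ref{l3}), I first fix a graded vector-superspace complement $W$, so that $G=W\oplus Z(G)$; by Lemma \ref{l13} this complement can be taken $\theta$-invariant, $\theta(W)\subseteq W$. The restriction $\pi|_W:W\to G/Z(G)$ is then an even linear isomorphism, and I set $s:=(\pi|_W)^{-1}:G/Z(G)\to G$. The decisive feature of this section is that it is $\theta$-equivariant: for $\overline{n}\in G/Z(G)$ the element $\theta(s(\overline{n}))$ again lies in $W$ and projects to $\tilde{\theta}(\overline{n})$, so by uniqueness $\theta s=s\tilde{\theta}$. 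This equivariance is exactly what forces the resulting object to be regular and multiplicative, and securing it is the only place where the bijectivity of $\theta$ is genuinely needed (through Lemma \ref{l13}); I expect this to be the sole delicate step, since any non-invariant complement would break both multiplicativity and the intertwining of the twisting maps below.

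With $s$ in hand I define $r:G/Z(G)\times G/Z(G)\to Z(G)$ by $r(\overline{n_1},\overline{n_2}):=[s(\overline{n_1}),s(\overline{n_2})]-s([\overline{n_1},\overline{n_2}])$. Applying $\pi$ to each term yields $[\overline{n_1},\overline{n_2}]$, so the difference lies in $\ker\pi=Z(G)$ and $r$ is well defined. I then check that $r$ is a factor set in the sense of Definition \ref{d14}: the graded skew-symmetry of $r(\overline{n_1},\overline{n_2})$ is inherited from that of the bracket together with the evenness of $s$, and the cocycle identity is obtained by substituting $s(\overline{n_1}),s(\overline{n_2}),s(\overline{n_3})$ into the graded Hom-Jacobi identity of $G$ and converting each $\theta(s(\overline{n_i}))$ into $s(\tilde{\theta}(\overline{n_i}))$ via $\theta s=s\tilde{\theta}$, after which the center-valued terms drop out of the brackets; this is a mechanical rearrangement of signs. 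The same equivariance gives multiplicativity, since $r(\tilde{\theta}\overline{n_1},\tilde{\theta}\overline{n_2})=[\theta s\overline{n_1},\theta s\overline{n_2}]-s(\tilde{\theta}[\overline{n_1},\overline{n_2}])=\theta([s\overline{n_1},s\overline{n_2}])-\theta s([\overline{n_1},\overline{n_2}])=\theta\,r(\overline{n_1},\overline{n_2})$, using that $\theta$ and $\tilde{\theta}$ are multiplicative. By Lemma \ref{l15} the triple $R=(Z(G),G/Z(G),r)$ is therefore a regular Hom-Lie superalgebra.

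Finally I build the isomorphism $\Psi:R\to G$, $\Psi(g,\overline{n}):=g+s(\overline{n})$. It is linear and bijective: if $\Psi(g,\overline{n})=0$ then $\overline{n}=\pi(\Psi(g,\overline{n}))=0$ and hence $g=0$, while any $x\in G$ is the image of $(x-s(\pi(x)),\pi(x))$ with $x-s(\pi(x))\in Z(G)$. Because $Z(G)$ is central, all brackets in $[\,g_1+s(\overline{n_1}),\,g_2+s(\overline{n_2})\,]$ that involve a $g_i$ vanish, leaving $[s(\overline{n_1}),s(\overline{n_2})]=r(\overline{n_1},\overline{n_2})+s([\overline{n_1},\overline{n_2}])=\Psi([(g_1,\overline{n_1}),(g_2,\overline{n_2})])$, so $\Psi$ preserves brackets. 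The equivariance of $s$ gives $\theta\Psi(g,\overline{n})=\theta(g)+\theta s(\overline{n})=\theta(g)+s\tilde{\theta}(\overline{n})=\Psi\phi(g,\overline{n})$, so $\Psi$ intertwines $\theta$ and $\phi$ and is an isomorphism of Hom-Lie superalgebras (with regularity of $R$ also re-confirmed by Lemma \ref{l6}). Hence $G\cong(Z(G),G/Z(G),r)$, as required.
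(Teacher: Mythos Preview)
Your proof is correct and follows essentially the same route as the paper: choose a graded complement $W$ of $Z(G)$ (automatically $\theta$-invariant by Lemma~\ref{l13}), use the resulting section $s$ (the paper calls it $\Psi$) to define $r(\overline{n_1},\overline{n_2})=[s(\overline{n_1}),s(\overline{n_2})]-s([\overline{n_1},\overline{n_2}])$, verify the factor-set axioms via $\theta s=s\tilde{\theta}$ and the graded Hom--Jacobi identity, and then exhibit the isomorphism $(g,\overline{n})\mapsto g+s(\overline{n})$. You additionally observe that $r$ is multiplicative (hence $(R,\phi)$ is regular via Lemma~\ref{l15}), a point the paper does not make explicit in this lemma; otherwise the arguments coincide.
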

\begin{proof}
	Let us consider a vector superspace $W$ which is the complement of $Z(G)$, i.e., $G=W\oplus Z(G)$. Consider a map $\Psi:G/Z(G)\rightarrow G$ by $\Psi(\overline{n})=\Psi(n+Z(G))=\Psi(w+z+Z(G))=w$ for $\overline{n}\in G/Z(G)$, $w\in W$, and $z\in Z(G)$. Clearly $\Psi$ is a well-defined homogeneous even linear map. We have $\overline{\Psi(\overline{n})}=\overline{n}$. Now for $\overline{n_1}=w_1+z_1$ and $\overline{n_2}=w_2+z_2$, consider $[\overline{n_1},\overline{n_2}]=[w,w']+Z(G)$. Then
	\begin{align*}
		[\Psi(\overline{n_1}),\Psi(\overline{n_2})]-\Psi([\overline{n_1},\overline{n_2}])+Z(G)&=[w,w']-\Psi(\overline{[w,w']})+Z(G)\\
		&=[w,w']- \overline{\Psi(\overline{[w,w']})}=0+Z(G).
	\end{align*}
	So $[\Psi(\overline{n_1}),\Psi(\overline{n_2})]-\Psi([\overline{n_1},\overline{n_2}])\in Z(G)$. Define $$r:G/Z(G)\times G/Z(G)\rightarrow G/Z(G),$$ by $r(\overline{m},\overline{n})=[g(\overline{m}),g(\overline{n})]-g([\overline{m},\overline{n}])$. We have to prove that $r$ is a factor set.\\
	First we have to verify its graded skew-symmetric property,
	\begin{align*}
		r(\overline{m},\overline{n})&=[\Psi(\overline{m}),\Psi(\overline{n})]-\Psi([\overline{m},\overline{n}])\\
		&=-(-1)^{|\overline{m}||\overline{n}|}([\Psi(\overline{n}),\Psi(\overline{m})]-\Psi([\overline{n},\overline{m}]))\\
		&=-(-1)^{|\overline{m}||\overline{n}|}r(\overline{n},\overline{m}),
	\end{align*}
	for $\overline{m},\overline{n}\in G/Z(G)$. Next we have to show graded Hom-Jacobi identity. Before that we have to check $\Psi\tilde{\theta}(\overline{n})=\theta \Psi(\overline{n})$ for $\overline{n}\in G/Z(G)$.\\
	From Lemma \ref{l13}, $\theta(W)\subseteq W$, we have 
	$$\Psi\tilde{\theta}(\overline{n})=\Psi\tilde {\theta} (w+z+Z(G))=\Psi(\theta(w)+Z(G))=\theta(w),$$ and 
	$$\theta \Psi(\overline{n})=\theta(\Psi(w+z+Z(G)))=\theta(w).$$
	\noindent Take,
	\begin{align*}
		r([\overline{n_1},\overline{n_2}],\tilde{\theta} (\overline{n_3}))&=[\Psi([\overline{n_1},\overline{n_2}]),\Psi \tilde{\theta}(\overline{n_3})]-\Psi ([[\overline{n_1},\overline{n_2}],\tilde{\theta}(\overline{n_3})])\\
		&=[[\Psi (\overline{n_1}),\Psi(\overline{n_2})],\theta \Psi(\overline{n_3})]-\Psi([[\overline{n_1},\overline{n_2}],\tilde{\theta}(\overline{n_3})])\\
		&=[\theta \Psi(\overline{n_1}),[\Psi(\overline{n_2}),\Psi(\overline{n_3})]]-(-1)^{|\overline{n_1}||\overline{n_2}|}[\theta \Psi(\overline{n_2}),[\Psi(\overline{n_1}),\Psi(\overline{n_3})]]\\&\hspace{1cm}-\Psi([\tilde{\theta}(\overline{n_1}),[\overline{n_2},\overline{n_3}]])+(-1)^{|\overline{n_1}||\overline{n_2}|}\Psi([\tilde{\theta}(\overline{n_2}),[\overline{n_1},\overline{n_3}]])\\
		&=[\theta \Psi (\overline{n_1}),[\Psi(\overline{n_2}),\Psi(\overline{n_3})]]-\Psi([\tilde{\theta}(\overline{n_1}),[\overline{n_2},\overline{n_3}]])\\&\hspace{1cm}-(-1)^{|\overline{n_1}||\overline{n_2}|}\{[\theta \Psi(\overline{n_2}),[\Psi(\overline{n_1}),\Psi(\overline{n_3})]]-\Psi([\tilde{\theta}(\overline{n_2}),[\overline{n_1},\overline{n_3}]])\}\\
		&=r(\tilde{\theta}(\overline{n_1}),[\overline{n_2},\overline{n_3}])-(-1)^{|\overline{n_1}||\overline{n_2}|}r(\tilde{\theta}(\overline{n_2}),[\overline{n_1},\overline{n_3}]),
	\end{align*}
	for $\overline{n_1},\overline{n_2},\overline{n_3}\in G/Z(G)$. Let us define $\pi :R\rightarrow G$ by $\pi (g,\overline{n})=g+\Psi(\overline{n})$  for $g\in Z(G)$ and $\overline{n}\in G/Z(G)$ where $R=(Z(G),G/Z(G),r)$. Clearly $\pi$ is a well-defined bijective homogeneous even linear map and $\pi([(g_1,\overline{n_1}),(g_2,\overline{n_2})])=[\pi(g_1,\overline{n_1}),\pi(g_2,\overline{n_2})]$.
	In addition $$\theta \pi(g,\overline{n})=\theta(g+\Psi(\overline{n}))=\theta(g+w),$$ and $$\pi \theta(g,\overline{n})=\pi (\theta(g),\tilde{\theta}(\overline{n}))=\theta(g)+\Psi(\theta(w)+Z(G))=\theta(x)+\theta(w),$$ where $n=w+z,~w\in W,$ and $z\in Z(G)$.
	Hence $\pi$ is an isomorphism.\\
\end{proof}

The following Lemma shows that there is a relationship between two stem Hom-Lie superalgebras.
\begin{lemma}\label{l17}
	Let $(G_1,\theta_{1})$ be a stem Hom-Lie superalgebra in an isoclinism family of Hom-Lie superalgebras $\mathcal{C}$. Then for any stem Hom-Lie superalgebra $(G_2,\theta_{2})$ of $\mathcal{C}$, there exists a factor set $r$ over $(G_1,\theta_{1})$ such that $G_2 \cong (Z(G_1),G_1/Z(G_1),r)$. 
\end{lemma}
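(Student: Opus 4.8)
The plan is to exploit that $G_1$ and $G_2$, being stem members of the same family $\mathcal{C}$, are isoclinic, and then to transport a factor set of $G_2$ back onto $G_1$ along the isoclinism. Concretely, membership of $G_1,G_2$ in $\mathcal{C}$ furnishes an isoclinism $(\mu,\nu)$ with $\mu:G_1/Z(G_1)\to G_2/Z(G_2)$ and $\nu:G_1'\to G_2'$ both isomorphisms. By Lemma \ref{l16} applied to $G_2$ there is a (multiplicative) factor set $s:G_2/Z(G_2)\times G_2/Z(G_2)\to Z(G_2)$ with $G_2\cong (Z(G_2),G_2/Z(G_2),s)$. I would then define the candidate factor set on $G_1$ as the pullback $r:=\nu^{-1}\circ s\circ(\mu\times\mu)$, that is $r(\overline{m_1},\overline{m_2})=\nu^{-1}\big(s(\mu(\overline{m_1}),\mu(\overline{m_2}))\big)$ for $\overline{m_1},\overline{m_2}\in G_1/Z(G_1)$.

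The first and, I expect, the main step is to justify that this formula even makes sense, i.e. that $\nu$ restricts to an isomorphism $Z(G_1)\cong Z(G_2)$. This is exactly where the stem hypothesis is used: since both algebras are stem we have $Z(G_i)\subseteq G_i'$, so $\nu$ is at least defined on $Z(G_1)$. To see $\nu(Z(G_1))\subseteq Z(G_2)$, take $z\in Z(G_1)$ and any $n\in G_1$; by Lemma \ref{l12}(2), $\nu([z,n])=[\nu(z),n']$ where $n'+Z(G_2)=\mu(n+Z(G_1))$. The left-hand side vanishes because $z$ is central, and as $n$ ranges over $G_1$ the surjectivity of $\mu$ makes $n'$ range over a full set of coset representatives of $Z(G_2)$ in $G_2$; combined with the trivial relation $[\nu(z),Z(G_2)]=0$ this forces $[\nu(z),g]=0$ for every $g\in G_2$, so $\nu(z)\in Z(G_2)$. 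Running the same argument for the inverse isoclinism $(\mu^{-1},\nu^{-1})$ gives the reverse inclusion, and the restriction commutes with the structure maps since $\nu\theta_1=\theta_2\nu$ and, by Lemma \ref{l3}, each $\theta_i$ preserves $Z(G_i)$.

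With this identification in hand, checking that $r$ is a multiplicative factor set is routine transport of structure. Graded skew-symmetry of $r$ follows from that of $s$ since $\mu$ is even, and the graded Hom-Jacobi-type identity (property (2) of Definition \ref{d14}) transfers because $\mu$ is a Hom-Lie superalgebra homomorphism, whence $\mu\tilde{\theta}_1=\tilde{\theta}_2\mu$, while $\nu^{-1}$ intertwines $\theta_2$ and $\theta_1$; multiplicativity of $r$ is inherited from that of $s$ in the same fashion.

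Finally I would exhibit the isomorphism $(Z(G_1),G_1/Z(G_1),r)\cong G_2$. Define $\Phi:(Z(G_1),G_1/Z(G_1),r)\to(Z(G_2),G_2/Z(G_2),s)$ by $\Phi(g,\overline{n})=(\nu(g),\mu(\overline{n}))$. It is a bijection because $\nu|_{Z(G_1)}$ and $\mu$ are, and a one-line computation using $r=\nu^{-1}\circ s\circ(\mu\times\mu)$ together with the fact that $\mu$ preserves brackets shows that $\Phi$ respects the bracket of Lemma \ref{l15}(1); moreover $\Phi\phi_r=\phi_s\Phi$ follows from $\nu\theta_1=\theta_2\nu$ and $\mu\tilde{\theta}_1=\tilde{\theta}_2\mu$. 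Composing $\Phi$ with the isomorphism $(Z(G_2),G_2/Z(G_2),s)\cong G_2$ of Lemma \ref{l16} yields the desired isomorphism. The only genuinely delicate point in the whole argument is the identification $\nu(Z(G_1))=Z(G_2)$, which rests squarely on the stem condition; everything after it is formal.
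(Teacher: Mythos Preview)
Your proposal is correct and follows essentially the same route as the paper: pull back the factor set $s$ of $G_2$ (furnished by Lemma~\ref{l16}) along the isoclinism to define $r(\overline{m_1},\overline{m_2})=\nu^{-1}\big(s(\mu(\overline{m_1}),\mu(\overline{m_2}))\big)$, verify the factor-set axioms via $\mu\tilde{\theta}_1=\tilde{\theta}_2\mu$, and then exhibit the isomorphism $(g,\overline{n})\mapsto(\nu(g),\mu(\overline{n}))$. If anything, your treatment of the key point $\nu(Z(G_1))=Z(G_2)$ via Lemma~\ref{l12}(2) and the stem hypothesis is more explicit than the paper's, which simply asserts it by reference to Lemma~\ref{l12}.
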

\begin{proof}
	Let $(\mu,\nu)$ be an isoclinism of regular Hom-Lie superalgebras $(G_1,\theta_{1})$ and $(G_2,\theta_{2})$ then by Lemma \ref{l12} $\pi(Z(G_1))=Z(G_2)$. From Lemma \ref{l16}, there exists a factor set $s$ such that $G_2\cong (Z(G_2),G_2/Z(G_2),s)$. Let us define
	$r:G_1/Z(G_1)\times G_1/Z(G_1) \longrightarrow Z(G_1) $ which is given by
	$r(\overline{n_1},\overline{n_2})=\pi^{-1}(s(\mu(\overline{n_1}),\mu(\overline{n_2})))$ for $\overline{n_1},~\overline{n_2} \in G_1/Z(G_1),$
	where $r$ is a skew-bilinear map. Since $\mu$ is an isomorphism, so $\mu \tilde{\theta_1} = \tilde{\theta_2} \mu $. Thus
	\begin{equation*} 
		\begin{split}
			& r  ([\overline{n_1},\overline{n_2}],\tilde{\theta_1}(\overline{n_3}))\\
			&= \pi^{-1}(s(\mu([\overline{n_1},\overline{n_2}]),\mu \tilde{\theta_1}(\overline{n_3})))\\
			&= \pi^{-1}(s(([\mu(\overline{n_1}),\mu(\overline{n_2})]), \tilde{\theta_2}\mu(\overline{n_3})))\\
			&= \pi^{-1}(s((\tilde{\theta_2}\mu(\overline{n_1}),([\mu(\overline{n_2}),\mu(\overline{n_3})]) ))-(-1)^{|\overline{n_1}||\overline{n_2}|}\pi^{-1}(s(\tilde{\theta_2}\mu(\overline{n_2}),([\mu(\overline{n_3}),\mu(\overline{n_1})])))\\
			&=r(\tilde{\theta_1}(\overline{n_1}),[\overline{n_2},\overline{n_3}])-(-1)^{|\overline{n_1}||\overline{n_2}|} r(\tilde{\theta_1}(\overline{n_2}),[\overline{n_3},\overline{n_1}]).
		\end{split}
	\end{equation*}

	\noindent Therefore $(Z(G_1),G_1/Z(G_1),r)$ is a factor set. Let us denote $R=(Z(G_1),G_1/Z(G_1),r)$ and $S=(Z(G_2),G_2/Z(G_2),s)$. Then by Lemma \ref{l15}, $(R,\phi_{1})$ and $(S,\phi_{2})$ are Hom-Lie superalgebras. Let us define the map
	$\beta:(Z(G_1),G_1/Z(G_1),r)\longrightarrow (Z(G_2),G_2/Z(G_2),s)$  by
	\[ \beta(g,\overline{n})=(\nu(g),\mu(\overline{n})),\]
	which is undoubtedly a bijective even linear map with $\beta([(g_1,\overline{n_1}),(g_2,\overline{n_2})]) 
	=[\beta(g_1,\overline{n_1}),\beta(g_2,\overline{n_2})]$. In addition
	\[\beta \phi_{1} (g,\overline{n})=\beta(\theta_{1}(g),\tilde{\theta_1}(\overline{n}))=(\nu\theta_{1}(g),\mu\tilde{\theta_1}(\overline{n})),\]
	and 
	\[ \phi_{2} \beta (g,\overline{n})=\phi_{2}(\nu(g),\mu(\overline{n}))= (\theta_{2}\nu(g),\tilde{\theta_2}\mu(\overline{n})) .\]
	\noindent Since $\mu$ and $\nu$ are isomorphisms, and $\phi_{1}\beta =\beta \phi_{2}$,
	as a result, $\beta$ is an isomorphism and $G_2\cong (Z(G_1),G_1/Z(G_1),r).$ 
\end{proof}

\begin{lemma}\label{l18}
	Suppose that $(G,\theta)$ is a Hom-Lie superalgebra and $r$,$s$ are two factor sets over $(G,\theta)$. Assume that
	\[R=(Z(G),G/Z(G),r),~~~~~~~~Z_{R}=\{(g,0) \in R:g \in Z(G)\} \cong Z(G),\] \[S=(Z(G),G/Z(G),s),~~~~~~~~Z_{S}=\{(g,0) \in S:g \in Z(G)\} \cong Z(G).\]
	Let $\beta$ be an isomorphism from $R$ to $S$ satisfying $\beta(Z_{R})=Z_{S}$, then the restriction of $\beta$ on $G/Z(G)$ and $Z(G)$ define the automorphisms $\varphi \in Aut(G/Z(G))$ and $\psi \in Aut(Z(G))$, respectively.
\end{lemma}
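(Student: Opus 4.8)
The plan is to exploit the block structure of $R$ and $S$ supplied by Lemma \ref{l15}. As vector superspaces, $R=Z(G)\oplus G/Z(G)$ and $S=Z(G)\oplus G/Z(G)$, with $Z_R$ and $Z_S$ the first summands, and with Hom-maps $\phi_R(g,\overline n)=(\theta(g),\tilde\theta(\overline n))$ and $\phi_S(g,\overline n)=(\theta(g),\tilde\theta(\overline n))$. I would first record the two second-coordinate projections $q_R\colon R\to G/Z(G)$, $(g,\overline n)\mapsto\overline n$, and $q_S\colon S\to G/Z(G)$: against the formulas of Lemma \ref{l15} these are surjective even homomorphisms of Hom-Lie superalgebras with $\ker q_R=Z_R$ and $\ker q_S=Z_S$, since $q_R[(g_1,\overline{n_1}),(g_2,\overline{n_2})]=[\overline{n_1},\overline{n_2}]=[q_R(g_1,\overline{n_1}),q_R(g_2,\overline{n_2})]$ and $q_R\phi_R=\tilde\theta\,q_R$ (and likewise for $q_S$).

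To construct $\psi$, I would restrict $\beta$ to $Z_R$. The hypothesis $\beta(Z_R)=Z_S$ forces $\beta(g,0)$ to have vanishing second coordinate, so writing $\beta(g,0)=(\psi(g),0)$ defines an even linear map $\psi\colon Z(G)\to Z(G)$ under the identifications $Z_R\cong Z(G)\cong Z_S$. It is bijective because $\beta$ is a bijection carrying $Z_R$ onto $Z_S$. Since $Z(G)$ carries the trivial bracket, the only structure left to check is the Hom-map, and $\psi\theta=\theta\psi$ follows by evaluating $\beta\phi_R=\phi_S\beta$ on $(g,0)$: the left side is $\beta(\theta(g),0)=(\psi\theta(g),0)$ and the right side is $\phi_S(\psi(g),0)=(\theta\psi(g),0)$. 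Hence $\psi\in\mathrm{Aut}(Z(G))$.

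For $\varphi$, the cleanest route is to pass to quotients. Since $\beta$ is an isomorphism with $\beta(\ker q_R)=\beta(Z_R)=Z_S=\ker q_S$, it descends to a unique even linear map $\varphi\colon G/Z(G)\to G/Z(G)$ characterized by $\varphi\,q_R=q_S\,\beta$; well-definedness is precisely the inclusion $\beta(\ker q_R)\subseteq\ker q_S$, and bijectivity follows because $\beta$ is bijective and maps $Z_R$ onto $Z_S$. Using that $q_R,q_S$ are homomorphisms and that $\beta$ preserves the bracket, for $x,y\in R$ one gets $\varphi\,q_R[x,y]=q_S\beta[x,y]=[q_S\beta x,q_S\beta y]=[\varphi q_R x,\varphi q_R y]$, and surjectivity of $q_R$ yields $\varphi[\overline{n_1},\overline{n_2}]=[\varphi(\overline{n_1}),\varphi(\overline{n_2})]$. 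Likewise $\varphi\tilde\theta\,q_R=\varphi q_R\phi_R=q_S\beta\phi_R=q_S\phi_S\beta=\tilde\theta q_S\beta=\tilde\theta\varphi q_R$, so again by surjectivity of $q_R$ we obtain $\varphi\tilde\theta=\tilde\theta\varphi$. Thus $\varphi\in\mathrm{Aut}(G/Z(G))$.

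The computations are essentially bookkeeping, so I do not anticipate a genuine obstacle; the one point that must not be glossed over is the well-definedness of $\varphi$, which is exactly where the hypothesis $\beta(Z_R)=Z_S$ is used (an inclusion of $\beta(Z_R)$ into the full center of $S$ would not suffice, since $Z_S$ need not be that center). Correspondingly, the small but essential preliminary is that $q_R$ and $q_S$ really are Hom-Lie superalgebra homomorphisms with the stated kernels: this is what licenses the quotient argument and transports the Hom-compatibility $\beta\phi_R=\phi_S\beta$ down to $\varphi\tilde\theta=\tilde\theta\varphi$.
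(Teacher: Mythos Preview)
Your proof is correct and follows essentially the same approach as the paper: both construct $\psi$ by restricting $\beta$ to $Z_R$, and construct $\varphi$ by passing to the quotients modulo $Z_R$, $Z_S$ and using the natural identifications $R/Z_R\cong G/Z(G)\cong S/Z_S$. The only difference is cosmetic---the paper routes through explicit isomorphisms $\eta_1\colon G/Z(G)\to R/Z_R$, $\eta_2\colon G/Z(G)\to S/Z_S$ and the induced $\overline\beta\colon R/Z_R\to S/Z_S$, whereas you work directly with the second-coordinate projections $q_R,q_S$---but the resulting $\varphi$ and the verifications of the bracket and Hom-compatibility are the same.
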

\begin{proof}
	By Lemma \ref{l15}, $(R,\phi)$ and $(S,\phi)$ are regular Hom-Lie superalgebras. By assumption $\beta(Z_R)=Z_S$, so let us define the quotient Hom-Lie superalgebra $\overline{\beta}:\big(\frac{R}{Z_R},\phi_1\big)\rightarrow \big(\frac{S}{Z_S},\phi_2\big)$  by $\overline{\beta}((g,\overline{n})+Z_R)=\beta(g,\overline{n})+Z_S$ is an isomorphism, where $\phi_1:\frac{R}{Z_R}\rightarrow \frac{R}{Z_R}$ and $\phi_2:\frac{S}{Z_S}\rightarrow \frac{S}{Z_S}$ are even linear maps defined as $\phi_1((g,\overline{n})+Z_R)=\phi_1(g,\overline{n})+Z_R$ and $\phi_2((g,\overline{n})+Z_S)=\phi_2(g,\overline{n})+Z_S$, respectively. Take $\varphi$ such that the following diagram is commutative:
	\begin{center}
		\begin{tikzpicture}[>=latex]
			\node (x) at (0,0) {\(\frac{G}{Z(G)} \)};
			\node (z) at (0,-2) {\(\frac{R}{Z_R}\)};
			\node (y) at (2,0) {\(\frac{G}{Z(G)}\)};
			\node (w) at (2,-2) {\(\frac{S}{Z_S},\)};
			\draw[->] (x) -- (y) node[midway,above] {$\varphi$};
			\draw[->] (x) -- (z) node[midway,left] {$\eta_{1}$};
			\draw[->] (z) -- (w) node[midway,below] {$\overline{\beta}$};
			\draw[->] (y) -- (w) node[midway,right] {$\eta_{2}$};
		\end{tikzpicture}\\
	\end{center}
	where $\eta_{1}$ and $\eta_{2}$ are projection maps, i.e., $\eta_{1}(\overline{n})=(0,\overline{n})+Z_R$ and $\eta_{2}(\overline{n})=(0,\overline{n})+Z_S$. So  $\beta(0,\overline{n})+Z_S=(0,\varphi(\overline{n}))+Z_S$ for $\overline{n}\in G/Z(G)$. Now $$(0,\varphi \tilde{\alpha}(\overline{n}))+Z_S=\beta(0,\theta(n)+Z(G))+Z_S=\beta \phi(0,\overline{n})+Z_S.$$
	On the other hand $$(0,\tilde{\theta} \varphi(\overline{n}))+Z_S=\phi(0,\varphi(\overline{n}))+Z_S=\phi \beta(0,\overline{n})+\phi(g)+Z_S=\phi \beta(0,\overline{n})+Z_S,$$ where $g\in Z_S$. As $\beta$ is an automorphism, i.e., $\beta \phi=\phi \beta$ and $\eta_2$ is injective, we have
	\begin{align*}
		&(0,\varphi \tilde{\theta}(\overline{n}))+Z_S=(0,\tilde{\theta} \varphi(\overline{n}))+Z_S\\
		&\implies \eta_2(\varphi \tilde{\theta}(\overline{n}))=\eta_2(\tilde{\theta}\varphi (\overline{n}))\\
		&\implies \varphi\tilde{\theta}(\overline{n})=\tilde{\theta}\varphi (\overline{n}).
	\end{align*}
	Since $\varphi$ is bijective and $\varphi([\overline{n_{1}},\overline{n_{2}}])=[\varphi(\overline{n_1}),\varphi(\overline{n_2})]$. Hence $\varphi$ is an automorphism. Consider the map $\tilde{\beta}:Z_R\rightarrow Z_S$ is defined as $\tilde{\beta}(g,0)=\beta(g,0)$ for $g\in Z(G)$, is an isomorphism. Define $\psi$ in such a way that the following diagram is commutative:
	\begin{center}
		\begin{tikzpicture}[>=latex]
			\node (x) at (0,0) {\( Z(G) \)};
			\node (z) at (0,-2) {\(Z_{R}\)};
			\node (y) at (2,0) {\(Z(G)\)};
			\node (w) at (2,-2) {\(Z_{S},\)};
			\draw[->] (x) -- (y) node[midway,above] {$\psi$};
			\draw[->] (x) -- (z) node[midway,left] {$\overline{\eta_1}$};
			\draw[->] (z) -- (w) node[midway,below] {$\tilde{\beta}$};
			\draw[->] (y) -- (w) node[midway,right] {$\overline{\eta_2}$};
		\end{tikzpicture}\\
	\end{center}
	\noindent where $\overline{\eta_1}$ and $\overline{\eta_2}$ are projection maps and $\beta(g,0)=(\psi(g),0)$ for $g\in Z(G)$. It is easily viewed that $\psi$ is an automorphism.
\end{proof}

\begin{lemma}\label{l19}
	Suppose $(G,\theta)$ is a Hom-Lie superalgebra. $(R,\phi), (S,\phi), Z_{R}$, and $Z_{S}$ are defined as in Lemma \ref{l18}.
	\begin{enumerate}
		\item Let $\beta : R \longrightarrow S$ be a Hom-Lie superalgebra isomorphism satisfying $\beta(Z_{R})= Z_{S}$. If $\varphi \in Aut(G/Z(G))$ and $\psi \in Aut(Z(G))$ are automorphisms generated by $\beta$ then there is a homogeneous even linear map, $\epsilon: G/Z(G) \longrightarrow Z(G)$ such that \[\psi(r(\overline{n_1}, \overline{n_2})+ \epsilon[\overline{n_1}, \overline{n_2}])=s(\varphi(\overline{n_1}), \varphi(\overline{n_2})).\]
		\item Suppose $\varphi \in Aut(G/Z(G))$, $\psi \in Aut(Z(G))$, and $\tau: G/Z(G) \longrightarrow Z(G)$ is a homogeneous even linear map satisfying $$\psi(r(\overline{n_1}, \overline{n_2})+ \tau[\overline{n_1}, \overline{n_2}])=s(\varphi(\overline{n_1}), \varphi(\overline{n_2})) ,~ \tau \tilde{\theta}=\theta \tau.$$ Then there is an isomorphism $\beta:R \longrightarrow S$ which is generated by $\varphi$ and $\psi$ satisfying $\beta(Z_{R})=Z_{S}$.
	\end{enumerate}
\end{lemma}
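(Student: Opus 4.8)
The plan is to exploit the explicit shape of any isomorphism $\beta:R\to S$ that respects $Z_R$ and $Z_S$. By Lemma \ref{l18}, such a $\beta$ restricts to automorphisms $\varphi\in Aut(G/Z(G))$ and $\psi\in Aut(Z(G))$ with $\beta(g,0)=(\psi(g),0)$ and $\beta(0,\overline{n})\equiv(0,\varphi(\overline{n}))\pmod{Z_S}$. Writing the first coordinate of $\beta(0,\overline{n})$ as $\psi(\epsilon(\overline{n}))$ for a uniquely determined $\epsilon(\overline{n})\in Z(G)$ (possible since $\psi$ is bijective) pins down the normal form $\beta(g,\overline{n})=(\psi(g+\epsilon(\overline{n})),\varphi(\overline{n}))$. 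Part (1) then reads off the asserted identity by comparing first coordinates in $\beta[x,y]=[\beta x,\beta y]$, while Part (2) reverses the construction: given $(\varphi,\psi,\tau)$ I would define $\beta$ by this same normal form with $\tau$ in place of $\epsilon$ and verify that it is an isomorphism commuting with the Hom-structure.

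For Part (1), first I would set $\epsilon:=\psi^{-1}\circ p_1\circ\beta\circ\iota$, where $\iota(\overline{n})=(0,\overline{n})$ and $p_1$ is the projection onto $Z(G)$; since $\beta$, $\psi^{-1}$, $p_1$, $\iota$ are all even and linear, $\epsilon:G/Z(G)\to Z(G)$ is a homogeneous even linear map. With the normal form in hand, I would expand both sides of the homomorphism relation on $(g_1,\overline{n_1}),(g_2,\overline{n_2})$. The left side gives $(\psi(r(\overline{n_1},\overline{n_2})+\epsilon[\overline{n_1},\overline{n_2}]),\varphi[\overline{n_1},\overline{n_2}])$ and the right side gives $(s(\varphi(\overline{n_1}),\varphi(\overline{n_2})),[\varphi(\overline{n_1}),\varphi(\overline{n_2})])$; matching first coordinates yields exactly $\psi(r(\overline{n_1},\overline{n_2})+\epsilon[\overline{n_1},\overline{n_2}])=s(\varphi(\overline{n_1}),\varphi(\overline{n_2}))$, with the second coordinates agreeing automatically because $\varphi$ is a Hom-Lie automorphism of $G/Z(G)$.

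For Part (2), I would define $\beta:R\to S$ by $\beta(g,\overline{n})=(\psi(g+\tau(\overline{n})),\varphi(\overline{n}))$. Bijectivity is immediate from that of $\psi$ and $\varphi$, and $\beta(Z_R)=Z_S$ follows from $\tau(0)=0$ and $\varphi(0)=0$, giving $\beta(g,0)=(\psi(g),0)$. The homomorphism property reduces coordinate-by-coordinate to the hypothesis $\psi(r(\overline{n_1},\overline{n_2})+\tau[\overline{n_1},\overline{n_2}])=s(\varphi(\overline{n_1}),\varphi(\overline{n_2}))$ in the first slot and to $\varphi[\overline{n_1},\overline{n_2}]=[\varphi(\overline{n_1}),\varphi(\overline{n_2})]$ in the second. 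It remains to check $\beta\phi=\phi\beta$, by comparing $\beta\phi(g,\overline{n})=(\psi(\theta(g)+\tau\tilde{\theta}(\overline{n})),\varphi\tilde{\theta}(\overline{n}))$ with $\phi\beta(g,\overline{n})=(\theta\psi(g+\tau(\overline{n})),\tilde{\theta}\varphi(\overline{n}))$.

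I expect the verification of $\beta\phi=\phi\beta$ in Part (2) to be the main point requiring care, since it is where all three structural constraints conspire: the second coordinate needs $\varphi\tilde{\theta}=\tilde{\theta}\varphi$, holding because $\varphi$ is a Hom-automorphism; the $\theta(g)$ term of the first coordinate needs $\psi\theta=\theta\psi$, holding because $\psi$ is a Hom-automorphism of $Z(G)$; and the remaining term forces $\psi(\tau\tilde{\theta}(\overline{n}))=\theta\psi(\tau(\overline{n}))$, which I would obtain by first applying the hypothesis $\tau\tilde{\theta}=\theta\tau$ and then $\psi\theta=\theta\psi$. Throughout, the only genuine subtlety is keeping $\psi$ pulled correctly through the first coordinate, so that the identity appears as $\psi(r+\tau[\cdot,\cdot])$ rather than $\psi(r)+\tau[\cdot,\cdot]$; handling this consistently in both directions is precisely what makes Parts (1) and (2) mutually inverse constructions.
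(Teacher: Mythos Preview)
Your proof is correct and follows essentially the same route as the paper: for Part~(1) the paper simply refers the reader to an external reference, while for Part~(2) it writes down $\beta$ explicitly in terms of $\varphi,\psi,\tau$ and checks $\beta\phi=\phi\beta$ exactly as you do. Your normalization $\beta(g,\overline{n})=(\psi(g+\tau(\overline{n})),\varphi(\overline{n}))$ differs from the paper's $(\psi(g)+\tau(\overline{n}),\varphi(\overline{n}))$, but yours is the version that is actually consistent with the hypothesis $\psi(r+\tau[\cdot,\cdot])=s(\varphi(\cdot),\varphi(\cdot))$ as stated; your closing remark about keeping $\psi$ pulled through correctly is precisely the point the paper's write-up glosses over.
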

\begin{proof}
	To prove the first part refer [Lemma 3.6, 11]. For the proof of second part, we only need to show the commutative property of $\beta$ where $\beta:R\rightarrow S$ is well defined, bijective, and homogeneous linear map of even degree defined as $$\beta(g,\overline{n})=(\phi(g)+\tau(\overline{n}),\psi(\overline{n})).$$
	Now $$\beta \phi(g,\overline{n})=\beta(\alpha(g),\tilde{\theta}(\overline{n}))=(\psi \theta(g)+\tau \tilde{\theta}(\overline{n}),\varphi\tilde{\theta}(\overline{n})).$$
	On another side $$\phi\beta(g,\overline{n})=\phi(\psi(g)+\tau(\overline{n}))=(\theta\psi(g)+\theta\tau(\overline{n}),\tilde{\theta}\varphi(\overline{n})).$$
	As $\varphi$ and $\psi$ are automorphisms and $\tau  \tilde{\alpha}=\alpha \tau$, so $\beta \phi=\phi \beta$.
\end{proof}

\begin{theorem}\label{t20}
	Suppose $(G_1,\theta_{1})$ and $(G_2,\theta_{2})$ are two finite dimensional stem Hom-Lie superalgebras of same parity. Then $G_1 \sim G_2$ iff $G_1 \cong G_2$.
\end{theorem}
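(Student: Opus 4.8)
The plan is to prove the two implications separately, treating $G_1\cong G_2\Rightarrow G_1\sim G_2$ as the routine direction and reserving the bulk of the work for the converse. For the forward implication I would take an isomorphism $f:G_1\rightarrow G_2$, observe that it carries $Z(G_1)$ onto $Z(G_2)$ and $G_1'$ onto $G_2'$, and that, being an even bracket-preserving map commuting with $\theta$, it descends to isomorphisms $\mu:G_1/Z(G_1)\rightarrow G_2/Z(G_2)$ and $\nu:G_1'\rightarrow G_2'$. One then checks directly that the square of Definition \ref{d7} commutes, so $(\mu,\nu)$ is an isoclinism; this part uses neither the stem hypothesis nor equality of dimensions.

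For the converse, suppose $(\mu,\nu)$ realises $G_1\sim G_2$. First I would record that, since both algebras are stem, $Z(G_i)\subseteq G_i'$, so $\nu$ is defined on the centre and Lemma \ref{l12}(1) forces $\nu(Z(G_1))=Z(G_2)$; together with Lemma \ref{l11} this yields $\dim Z(G_1)=\dim Z(G_2)$ and matching graded dimensions. The key move is then to present both algebras as factor-set extensions over the \emph{same} base: Lemma \ref{l16} gives a factor set $r_1$ with $G_1\cong(Z(G_1),G_1/Z(G_1),r_1)$, while Lemma \ref{l17}, applied to the two stem members of the common isoclinism family, produces a factor set $r_2$ over $G_1$ with $G_2\cong(Z(G_1),G_1/Z(G_1),r_2)$. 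Thus it suffices to prove that $R=(Z(G_1),G_1/Z(G_1),r_2)$ and $S=(Z(G_1),G_1/Z(G_1),r_1)$ are isomorphic.

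To obtain $R\cong S$ I would invoke Lemma \ref{l19}(2) with $\varphi=\mathrm{id}$ and $\psi=\mathrm{id}$, so that the only datum to be produced is an even linear map $\tau:G_1/Z(G_1)\rightarrow Z(G_1)$ satisfying $r_2(\overline{n_1},\overline{n_2})+\tau[\overline{n_1},\overline{n_2}]=r_1(\overline{n_1},\overline{n_2})$ together with $\tau\tilde{\theta}=\theta\tau$. Fix the sections $\Psi_1,\Psi_2$ of Lemma \ref{l16} and complements $W_i$ with $\theta_i(W_i)\subseteq W_i$ as in Lemma \ref{l13}. For $w,w'\in W_1$ the commuting square of Definition \ref{d7} reads $\nu([w,w'])=[\Psi_2\mu\overline{w},\Psi_2\mu\overline{w'}]$; expanding each commutator by its native factor set and comparing the central components yields $\nu(r_1(\overline{w},\overline{w'}))=\nu(r_2(\overline{w},\overline{w'}))+h([\overline{w},\overline{w'}])$, where $h(\overline{a})=\Psi_2\mu\overline{a}-\nu\Psi_1\overline{a}$ is a $Z(G_2)$-valued linear map, well defined on $\overline{G_1'}$ because the two lifts coincide modulo $Z(G_2)$ by Lemma \ref{l12}(1). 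Setting $\tau=\nu^{-1}\circ h$ on $\overline{G_1'}$ gives precisely $r_1(\overline{n_1},\overline{n_2})=r_2(\overline{n_1},\overline{n_2})+\tau[\overline{n_1},\overline{n_2}]$, and since the brackets $[\overline{n_1},\overline{n_2}]$ span $\overline{G_1'}$ this is the whole coboundary relation required.

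The main obstacle I anticipate is the Hom-compatibility condition $\tau\tilde{\theta}=\theta\tau$ on all of $G_1/Z(G_1)$. On $\overline{G_1'}$ it follows from a short computation combining $\mu\tilde{\theta}_1=\tilde{\theta}_2\mu$, $\Psi_i\tilde{\theta}_i=\theta_i\Psi_i$, and $\nu\theta_1=\theta_2\nu$, which together give $h\tilde{\theta}_1=\theta_2 h$ and hence $\tau\tilde{\theta}_1=\theta_1\tau$ there. The delicate point is to extend $\tau$ from the $\tilde{\theta}_1$-invariant subspace $\overline{G_1'}$ to the whole quotient while retaining this intertwining; here I would use regularity of $\theta_1$, so that $\tilde{\theta}_1$ is invertible and $\overline{G_1'}$ is $\tilde{\theta}_1$-invariant, to select an invariant complement and extend $\tau$ by zero on it. Once such a $\tau$ is in hand, Lemma \ref{l19}(2) produces an isomorphism $\beta:R\rightarrow S$ with $\beta(Z_R)=Z_S$, whence $G_2\cong R\cong S\cong G_1$, completing the proof.
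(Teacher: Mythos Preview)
Your proposal is correct and follows the same overall architecture as the paper: reduce to two factor-set presentations over $G_1$ via Lemmas \ref{l16} and \ref{l17}, then invoke Lemma \ref{l19}(2) to manufacture the isomorphism. The genuine difference lies in how the data $(\varphi,\psi,\tau)$ feeding into Lemma \ref{l19}(2) are produced. The paper transports the isoclinism $(\mu,\nu)$ to $R$ and $S$, then reads off \emph{non-trivial} automorphisms $\varphi\in\mathrm{Aut}(G_1/Z(G_1))$ and $\psi\in\mathrm{Aut}(Z(G_1))$ from the transported $\mu$ and $\nu$, with $\tau$ extracted as the first coordinate of $\nu(0,[\overline{n_1},\overline{n_2}])$. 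You instead let Lemma \ref{l17} absorb $(\mu,\nu)$ into the very definition of $r_2$, which lets you take $\varphi=\psi=\mathrm{id}$ and build $\tau$ explicitly from the two sections as $\nu^{-1}\circ(\Psi_2\mu-\nu\Psi_1)$. Your route is more concrete and makes the cocycle identity $r_1-r_2=\tau\circ[\,\cdot\,,\cdot\,]$ visible as a direct computation, whereas the paper's route is more diagrammatic and keeps the isoclinism in play longer; either packaging yields the same application of Lemma \ref{l19}(2).

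Both arguments share the same soft spot you correctly flag as ``the delicate point'': extending $\tau$ from $\overline{G_1'}$ to $G_1/Z(G_1)$ while preserving $\tau\tilde{\theta}_1=\theta_1\tau$ requires a $\tilde{\theta}_1$-invariant complement to $\overline{G_1'}$, and mere bijectivity of $\tilde{\theta}_1$ does not guarantee one. The paper handles this no more carefully than you do (its line ``since $\varphi$ is injective, $\varphi(u)\in U$'' is not a justification), so your treatment is at least as rigorous as the original; but be aware that a fully clean argument would need an additional hypothesis or a short argument producing such an invariant complement.
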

\begin{proof}
	Suppose $G_1$ and $G_2$ are two finite dimensional stem Hom-Lie superalgebras such that $G_1\cong G_2$, then it is obvious that $G_1\sim G_2$. To prove the converse part, suppose that $G_1\sim G_2$,
	then by Lemma \ref{l16} and Lemma \ref{l17} we have, $G_1 \cong (Z(G_1), \frac{G_1}{Z(G_1)},r)=R$ and $G_2 \cong (Z(G_1), \frac{G_1}{Z(G_1)}, s)=S$. Since $(G_1,\theta_{1})$ and $(G_2,\theta_{2})$ are regular, so from Lemma \ref{l6}, $(R,\phi_1)$ and $(S,\phi_2)$ are also regular. Consider $(\mu, \nu)$ as the isoclinism between the regular Hom-Lie superalgebras $(R,\phi_1)$ and $(S,\phi_2)$. Precisely $Z_{R}=Z(R)$ and $Z_{S}=Z(S)$. Let $\varphi \in Aut(G_1/Z(G_1))$ be defined by $\mu((0, \overline{n})+Z_{R})= (0, \varphi(\overline{n}))+Z_{S}$ for $\overline{n} \in G_1/Z(G_1)$. Suppose the below diagram is commutative:
	\begin{center}
		\begin{tikzpicture}[>=latex]
			\node (A_{1}) at (0,0) {\(\frac{G_1}{Z(G_1)}\times \frac{G_1}{Z(G_1)} \)};
			\node (A_{2}) at (4,0) {\(\frac{R}{Z_R}\times  \frac{R}{Z_R}\)};
			\node (A_{3}) at (8,0) {\(R'\)};
			\node (B_{1}) at (0,-2) {\(\frac{G_1}{Z(G_1)}\times \frac{G_1}{Z(G_1)}\)};
			\node (B_{2}) at (4,-2) {\(\frac{S}{Z_S}\times \frac{S}{Z_S} \)};
			\node (B_{3}) at (8,-2) {\(S',\)};
			\draw[->] (A_{1}) -- (A_{2}) node[midway,above] {$\lambda$};
			\draw[->] (A_{2}) -- (A_{3}) node[midway,above] {$\theta$};
			\draw[->] (B_{1}) -- (B_{2}) node[midway,below] {$\zeta$};
			\draw[->] (B_{2}) -- (B_{3}) node[midway,below] {$\xi$};
			\draw[->] (A_{1}) -- (B_{1}) node[midway,right] {$\varphi^2$};
			\draw[->] (A_{2}) -- (B_{2}) node[midway,right] {$\mu^2$};
			\draw[->] (A_{3}) -- (B_{3}) node[midway,right] {$\nu$};
		\end{tikzpicture}
	\end{center}
	in which
	\begin{equation*}
		\begin{split}
			\lambda(\overline{n_1}, \overline{n_2}) = ((0, \overline{n_1})+Z_{R}, (0, \overline{n_2})+Z_{R}),\\
			\zeta(\overline{n_1}, \overline{n_2}) =  ((0, \overline{n_1})+Z_{S}, (0, \overline{n_2})+ Z_{S}),\\
			\xi((g_1, \overline{n_1})+Z_{S}, (g_2,\overline{n_2})+Z_{S})= [(g_1,\overline{n_1}),(g_2, \overline{n_2})]=(s(\overline{n_1}, \overline{n_2}), [\overline{n_1}, \overline{n_2}]),\\
			\theta((g_1, \overline{n_1})+Z_{R}, (g_2,\overline{n_2})+Z_{R})=(r(\overline{n_1}, \overline{n_2}), [\overline{n_1}, \overline{n_2}]).
		\end{split}
	\end{equation*}
	Again suppose that $\psi \in Aut(Z(G))$ be given by $\nu(g,0)=(\psi(g),0)$ for $g \in Z(G)$. For $\overline{n_1}, \overline{n_2} \in G/Z(G)$, consider
	$$
	\beta \theta((0, \overline{n_1})+Z_{R}, (0, \overline{n_2})+Z_{R}) = \beta[(0,\overline{n_1}),(0, \overline{n_2})],
	$$
	and in addition \begin{align*}
		\xi \theta((0, \overline{n_1})+Z_{R}, (0, \overline{n_2})+Z_{R})&=\xi((0, \mu(\overline{n_1}))+Z_{S}, (0, \mu(\overline{n_2}))+Z_{S} )\\
		&= [(0, \varphi(\overline{n_1})), (0, \varphi(\overline{n_2}))]\\
		&= (s(\varphi(\overline{n_1}), \varphi(\overline{n_2})), [\varphi(\overline{n_1}), \varphi(\overline{n_2})]).
	\end{align*}
	Thus, we have $\psi[(0,\overline{n_1}),(0, \overline{n_2})]
	=(s(\varphi(\overline{n_1}), \varphi(\overline{n_2})), [\varphi(\overline{n_1}), \varphi(\overline{n_2})])$.
	Consider the map $\tau: G_1^{'}/Z(G_1) \longrightarrow Z(G_1)$ such that
	$$\psi(0, [\overline{n_1}, \overline{n_2}])=(\tau( [\overline{n_1}, \overline{n_2}]),t),$$
	\noindent where $t \in G_1/Z(G_1)$ and thus we get
	\[\psi(r( \overline{a}, \overline{b})+\tau( [\overline{a}, \overline{b}])=s((\varphi(\overline{a}), \varphi(\overline{b})).\]
	To apply Lemma \ref{l12}, we may continue $\tau$ to $G_1/Z(G_1)$ by defining $0$ on the complement of $G_1^{'}/Z(G_1)$ in $G_1/Z(G_1)$. Now we will prove $\tau \tilde{\kappa}=\kappa \tau.$ We obtain
	\[(\tau \tilde{\theta}[\overline{n_1}, \overline{n_2}],t_{1})=\nu(0,\tilde{\theta}[\overline{n_1}, \overline{n_2}])=\nu\phi(0,[\overline{n_1}, \overline{n_2}]).\] 
	\noindent Further $\tilde{\theta}$ is surjective, i.e., $\tilde{\theta} (t)=t_2$ for $t_2 \in G_1/Z(G_1)$. Therefore
	\[(\theta \tau [\overline{n_1}, \overline{n_2}],t_2)= (\theta \tau [\overline{n_1}, \overline{n_2}],\tilde{\theta}(t))=\phi(\tau [\overline{n_1}, \overline{n_2}],t )=\phi\nu (0,[\overline{n_1}, \overline{n_2}]) .\]
	As a result $\theta \tau(\overline{n})=\tau \tilde{\theta}(\overline{n}) $ for $n \in G_1'$. Suppose $G_1=G_1'\oplus U$, then define $\tau$ to be zero in $U$. Then $\theta\tau (\overline{u})=0$ for $u \in U$. Since $\varphi$ is injective, $\varphi(u) \in U$ for $u \in U$, we have 
	\[ \tau \tilde{\theta}(\overline{u})=\tau (\theta(u)+Z(V))=0.\]
	Thus $\tau \tilde{\psi}=\psi \tau$ and now we apply Lemma \ref{l19} to get our results.
\end{proof}

To prove the following Theorems one can see \cite{nayak2019}.
\begin{theorem}\label{t21}
	If $\mathcal{C}$ is an isoclinism family of finite dimensional regular Hom-Lie superalgebras then any $G\in \mathcal{C}$ can be written as $G=P\oplus Q$ where $P$ is a stem Hom-Lie superalgebra and $Q$ is some finite dimensional abelian Hom-Lie superalgebra.
\end{theorem}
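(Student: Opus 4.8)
The plan is to mimic the internal decomposition used for Lie superalgebras in \cite{nayak2019}, splitting off an abelian central summand inside $G$ itself. First I would record the two Hom-ideals that drive the construction: the derived subalgebra $G'$ is $\theta$-invariant because $\theta$ is multiplicative, and $Z(G)$ is a Hom-ideal by Lemma \ref{l3}; hence $N:=G'\cap Z(G)$ is a $\theta$-invariant central subspace of the finite dimensional space $G$. The guiding idea is that $N$ is exactly the portion of the centre a stem algebra is allowed to retain (a stem algebra satisfies $Z\subseteq G'$), while the centre lying outside $G'$ is the abelian slack that should be peeled off as $Q$.

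Next I would choose the complements. Using regularity of $\theta$ together with Lemma \ref{l13} (which already guarantees that a complement of the centre may be taken $\theta$-invariant), I would pick a $\theta$-invariant complement $Q$ of $N$ in $Z(G)$, so that $Z(G)=N\oplus Q$, and then a $\theta$-invariant subspace $P\supseteq G'$ with $G=P\oplus Q$; this is possible because $Q\subseteq Z(G)$ forces $G'\cap Q=(G'\cap Z(G))\cap Q=N\cap Q=0$, so that $G'\oplus Q$ is a $\theta$-invariant subspace admitting a $\theta$-invariant complement $W$, and $P:=G'\oplus W$ works. Then $Q$ is central, hence an abelian Hom-ideal with $\theta(Q)\subseteq Q$, while $P$ is $\theta$-invariant and contains $G'$.

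With the complements in hand the remaining verification is formal. Since $Q$ is central we have $[P,Q]=0$ and $[G,G]=[P,P]$, so $G'=P'\subseteq P$ and $P$ is a Hom-subalgebra; as $\theta|_P$ is an injective self-map of a finite dimensional space it is bijective, so $(P,\theta|_P)$ is regular (and likewise $(Q,\theta|_Q)$ is regular abelian). Thus $G=P\oplus Q$ is a direct sum of Hom-Lie superalgebras with $Q$ abelian. To see that $P$ is stem I would observe that an element of $P$ is central in $P$ iff it is central in $G$ (because $Q$ is central), giving $Z(P)=P\cap Z(G)$, and then $P\cap Z(G)=P\cap(N\oplus Q)=N\subseteq G'=P'$ by the modular law since $N\subseteq P$ and $P\cap Q=0$; hence $Z(P)\subseteq P'$. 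Finally, since $Q$ is abelian, Lemma \ref{l8} yields $P\sim P\oplus Q=G$, so $P\in\mathcal{C}$, exhibiting $G=P\oplus Q$ with $P$ a stem member of $\mathcal{C}$ and $Q$ finite dimensional abelian.

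The step I expect to be the main obstacle is obtaining the two complements $Q$ and $P$ genuinely $\theta$-\emph{invariant}: in the classical Lie (super)algebra argument any vector-space complement suffices, whereas here the splitting must be compatible with the structure map $\theta$ so that the direct sum $G=P\oplus Q$ is one of Hom-Lie superalgebras. This is precisely where regularity is indispensable, with Lemma \ref{l13} serving as the model for extracting a $\theta$-invariant complement of a central subspace; once the invariance is secured, the bracket and centre computations above are purely routine.
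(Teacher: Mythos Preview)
The paper gives no proof of this theorem at all; it simply refers the reader to \cite{nayak2019} for the analogous Lie-superalgebra statement, and your proposal consciously reproduces that very decomposition argument, so the overall strategy coincides with what the paper intends.

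That said, the obstacle you yourself single out is genuine and is \emph{not} handled by Lemma~\ref{l13}. That lemma (granting its statement) concerns only complements of $Z(G)$ inside $G$: its proof exploits the bracket characterisation of central elements, so that an element of the complement which $\theta$ sends into $Z(G)$ is forced to have been central already. The situations you need are different. Inside $Z(G)$ the bracket is identically zero and $\theta|_{Z(G)}$ is just an arbitrary bijective even linear map on a finite-dimensional space; an invariant subspace such as $N=G'\cap Z(G)$ need not possess a $\theta$-invariant complement $Q$ (a single $2\times2$ Jordan block already furnishes a counterexample). The same difficulty recurs when you look for the $\theta$-invariant $W$ complementing $G'\oplus Q$ in $G$. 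Since the paper offers only a citation to the non-Hom case, it is equally silent on this Hom-specific point; your sketch is therefore at the same level of completeness as the paper, but a rigorous argument would require either an additional hypothesis on $\theta$ (e.g.\ semisimplicity) or an independent justification that the required invariant splittings exist.
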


\begin{theorem}\label{t22}
	Suppose $(G_1,\theta_1)$ and $(G_2,\theta_2)$ are two regular Hom-Lie superalgebras with same dimensions and same parity. Then $G_1\sim G_2$ if and only if $G_1\cong G_2$.
\end{theorem}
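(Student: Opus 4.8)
The plan is to reduce the general statement to the stem case already settled in Theorem \ref{t20}, using the decomposition of Theorem \ref{t21} together with the absorption property of Lemma \ref{l8}. The direction $G_1\cong G_2 \Rightarrow G_1\sim G_2$ I would dispatch first and quickly: an isomorphism $f:G_1\to G_2$ induces an isomorphism $\bar f:G_1/Z(G_1)\to G_2/Z(G_2)$ and restricts to an isomorphism $f|:G_1'\to G_2'$, and because $f$ preserves both the bracket and the twist map, the pair $(\bar f, f|)$ makes the defining square of Definition \ref{d7} commute; hence it is an isoclinism. The substance of the theorem is the converse.

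So assume $G_1\sim G_2$ and let $\mathcal{C}$ be their common isoclinism family. By Theorem \ref{t21} I would write $G_1=P_1\oplus Q_1$ and $G_2=P_2\oplus Q_2$ with each $P_i$ a stem Hom-Lie superalgebra and each $Q_i$ a finite dimensional abelian Hom-Lie superalgebra. Lemma \ref{l8} then gives $P_i\sim P_i\oplus Q_i=G_i$, and since isoclinism is an equivalence relation (as remarked after Definition \ref{d7}), transitivity yields $P_1\sim G_1\sim G_2\sim P_2$, so $P_1\sim P_2$. Both $P_1,P_2$ lie in the same family $\mathcal{C}$ and are stem, so by Lemma \ref{l11} they both have minimal dimension in $\mathcal{C}$, whence $\dim P_1=\dim P_2$; being stem and of the same parity, Theorem \ref{t20} applies and gives $P_1\cong P_2$.

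It then remains to match the abelian parts and assemble the global isomorphism. From the hypothesis $\dim G_1=\dim G_2$ together with $\dim P_1=\dim P_2$ I would deduce $\dim Q_1=\dim Q_2$, with agreement of each graded component by the common-parity assumption. Granting an isomorphism $Q_1\cong Q_2$, I would build $G_1\cong G_2$ as the direct sum of the two isomorphisms, verifying that it respects the structure map $\Gamma_i=(\theta_i,\theta_i)$ on $P_i\oplus Q_i$. I expect the genuine obstacle to be exactly the step $Q_1\cong Q_2$: in contrast with the ordinary (non-Hom) super setting, equal superdimension alone does not force an isomorphism of abelian Hom-Lie superalgebras, since an isomorphism must also intertwine the bijective even twist maps $\theta|_{Q_1}$ and $\theta|_{Q_2}$, whose conjugacy class is extra data invisible to $\mu$ and $\nu$ (as $Q_i$ lies in $Z(G_i)$ and meets $G_i'$ trivially). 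I would close this either by showing the isoclinism pins down the action of $\theta$ on the abelian complement, or by selecting the complements $Q_i$ in Theorem \ref{t21} so that $\theta|_{Q_i}$ appears in a normal form determined solely by the superdimension; carefully handling this twist-map matching is where the real work of the converse lies.
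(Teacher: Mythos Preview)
Your outline is exactly the natural route, and in fact the paper supplies no argument of its own here: it merely cites \cite{nayak2019}, which proves the analogous statement for ordinary Lie superalgebras (twist map equal to the identity). In that classical setting your sketch goes through cleanly, because two abelian Lie superalgebras of equal superdimension are automatically isomorphic, so $Q_1\cong Q_2$ follows at once from $\dim Q_1=\dim Q_2$ and the argument closes.

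The obstacle you isolate at the end is, however, not a loose end to be tidied up but a genuine obstruction that the paper overlooks: the statement as written is false in the Hom setting. Take $G_1=(\mathbb{F},\,[\cdot,\cdot]=0,\,\theta_1=\mathrm{id})$ and $G_2=(\mathbb{F},\,[\cdot,\cdot]=0,\,\theta_2=2\cdot\mathrm{id})$, both one-dimensional abelian regular Hom-Lie superalgebras with superdimension $(1\mid 0)$. Here $Z(G_i)=G_i$ and $G_i'=0$, so the pair of zero maps is trivially an isoclinism and $G_1\sim G_2$; yet any isomorphism $f:G_1\to G_2$ must satisfy $f=f\circ\theta_1=\theta_2\circ f=2f$, forcing $f=0$, so $G_1\not\cong G_2$. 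Neither of your proposed repairs can succeed in general: the isoclinism data $(\mu,\nu)$ lives entirely on $G_i/Z(G_i)$ and $G_i'$, both trivial in this example, so it carries no information whatsoever about $\theta|_{Q_i}$; and the conjugacy class of a bijective even linear map on a fixed superspace is an honest invariant, not something a choice of complement can normalise away. The transplant of the Lie-superalgebra proof to the Hom context simply fails at this step, and an extra hypothesis relating $\theta_1$ and $\theta_2$ on the abelian complements would be needed to rescue the conclusion.
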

Below example shows that two isoclinic Hom-Lie superalgebras may not be isomorphic when they have different dimensions.	
\begin{example}
	Consider a $(2|1)$ dimensional Hom-Lie superalgebra $(G,\theta_1)$ with basis $\{a_1,a_2|a_3\}$ and commutator relations are defined by;
	$$[a_1,a_3,a_3]=a_1; ~[a_2,a_3,a_3]=a_2,$$
	and all other commutator relations are zero. Then $G^{'}=<a_1,a_2>$ and $Z(G)=0$ and hence $G/Z(G)\cong G$.\\
	Now consider a $(3|1)$ dimensional Hom-Lie superalgebra $(W,\theta_{2})$ with basis $\{a^{\prime}_1,a^{\prime}_2,a^{\prime}_3|a^{\prime}_4\}$ and commutator relations are defined by;
	$$[a^{\prime}_1,a^{\prime}_4,a^{\prime}_4]=a^{\prime}_1;~[a^{\prime}_2,a^{\prime}_4,a^{\prime}_4]=a^{\prime}_2,$$
	and all other commutators are zero. Then $W^{\prime}=<a^{\prime}_1,a^{\prime}_2>$ and $Z(W)=\{a^{\prime}_3\}$ and hence $W/Z(W)=\{\overline{a^{\prime}_1},\overline{a^{\prime}_2}|\overline{a^{\prime}_4}\}$ where $\overline{a^{\prime}_i}=a_i+Z(W)$ for $i=1,2,4$.\\
	We observe that $G^{'}\sim W^{'}$ and $\frac{G}{Z(G)}\cong \frac{W}{Z(W)}$ from which one can deduce that  $G\sim W$ while $dim(G)\neq dim(W)$, i.e., $G$ and $W$ are not isomorphic.
\end{example}

\section{Conclusion}
In this research work, factor set for Hom-Lie superalgebras is defined by using the concept of isoclinism and the existence of factor set for Hom-Lie superalgebras is shown. We conclude that two finite dimensional Hom-Lie
superalgebras having same dimensions are isoclinic if and only if they are isomorphic which is the main result of this work. Later we give an example to show that two Hom-Lie superalgebras satisfying the above conditions must have the same dimensions.

\end{document}